\documentclass[11pt,a4paper]{amsart}
\listfiles

\usepackage[english]{babel} 
\usepackage{hyperref} 
\usepackage{amsfonts,amscd,amsmath,amssymb,amsthm} 
\usepackage{xparse} 
\usepackage{dsfont} 
\usepackage[utf8]{inputenc} 

\marginparwidth 20mm \addtolength{\textheight}{10mm}
\addtolength{\textwidth}{20mm} \addtolength{\topmargin}{-10mm}

\numberwithin{equation}{section}

\marginparwidth 20mm
\addtolength{\textheight}{10mm}
\addtolength{\textwidth}{20mm}
\addtolength{\topmargin}{-10mm}
\oddsidemargin -4mm
\evensidemargin -4mm

\newtheorem{theorem}{Theorem}[section]{\bf}{\it}
\newtheorem{proposition}[theorem]{Proposition}{\bf}{\it}
\newtheorem{corollary}[theorem]{Corollary}{\bf}{\it}
\newtheorem{lemma}[theorem]{Lemma}{\bf}{\it}

\theoremstyle{definition}
\newtheorem{definition}[theorem]{Definition}
\newtheorem{remark}[theorem]{Remark}
\newtheorem{example}[theorem]{Example}

\theoremstyle{plain}
\newcounter{countertheoremintro}
\newtheorem{introtheorem}[countertheoremintro]{Theorem}

\newcommand{\idempotents}{E}
\newcommand{\spectrum}{\hat{\idempotents}_0}
\newcommand{\univgrpd}{G_u(S)}
\newcommand{\hrel}{\mathrel{\mathcal{H}}}
\newcommand{\lrel}{\mathrel{\mathcal{L}}}
\newcommand{\rrel}{\mathrel{\mathcal{R}}}
\newcommand{\drel}{\mathrel{\mathcal{D}}}


\title{A note on the quasi-diagonality of inverse semigroup reduced C*-algebras}

\author[Diego Mart\'{i}nez]{Diego Mart\'{i}nez $^{1}$}
\address{Mathematisches Institut, WWU M\"{u}nster, Einsteinstr. 62, 48149 M\"{u}nster, Germany.}
\email{diego.martinez@uni-muenster.de}

\date{\today}
\thanks{{$^{1}$} Partly funded by research projects MTM2017-84098-P, Severo Ochoa SEV-2015-0554 and BES-2016-077968 of the Spanish Ministry of Economy and Competition (MINECO), Spain. Partly funded by the Deutsche Forschungsgemeinschaft (DFG, German Research Foundation) under Germany’s Excellence Strategy – EXC 2044 – 390685587, Mathematics Münster – Dynamics – Geometry – Structure; the Deutsche Forschungsgemeinschaft (DFG, German Research Foundation) – Project-ID 427320536 – SFB 1442, and ERC Advanced Grant 834267 - AMAREC}
\keywords{Inverse semigroup, quasi-diagonality, amenability}
\subjclass[2020]{20M18, 46L05, 47A66, 46L89}


\begin{document}

  \begin{abstract}
    In this note we start the study of whether the reduced C*-algebra of an inverse semigroup is quasi-diagonal, making explicit use of the inner structure of this class of semigroups in order to produce quasi-diagonal approximations. Given a discrete inverse semigroup, we detail the relationship between its isolated subgroups and the quasi-diagonality of its reduced C*-algebra, and prove that such subgroups must be amenable. Moreover, we give a direct characterization of the quasi-diagonality of an inverse semigroup whose universal groupoid is minimal. Lastly, we also study the relevance of Green's $\mathcal{D}$-relation when considering quasi-diagonality questions, and give a sufficient condition for the quasi-diagonality of a general inverse semigroup.
  \end{abstract}
  \maketitle

  \begin{center}
    \textit{Dedicated to the memory of Paqui, Lali and Pilar}
  \end{center}
  \vspace{1mm}

  \section{Introduction}\label{sec:intro}
  Quasi-diagonality was introduced by Halmos~\cite{H69} with the goal of studying the invariant subspaces of a fixed bounded linear operator on an infinite-dimensional Hilbert space. He intended to study the invariant subspaces of a large class of operators, including the compact operators and the normal ones, which were known to have invariant subspaces. Roughly speaking, the very name \textit{quasi-diagonal} refers to the fact that these operators are almost diagonal in some basis, meaning that there is a basis of the Hilbert space on which the operator almost acts. In mathematical terms, we say an operator is \textit{quasi-diagonal} if there is an increasing sequence of finite-dimensional projections that converge strongly to the identity and asymptotically commute with the original operator. Among other things, it was quickly realized that if an operator is quasi-diagonal then so is the concrete C*-algebra it generates, and hence quasi-diagonality became a notion of great relevance within the C*-community. In particular, Brown defined and studied the so-called \textit{quasi-diagonal traces} in~\cite{B06}, and boldly predicted that these traces (and quasi-diagonality in general) were to have an essential role within the classification program of nuclear simple C*-algebras. This insight was indeed well founded, as the stably finite half of the classification program now hinges upon these quasi-diagonal traces (see~\cite{TWW17} and references therein) and how they can be approximated.

  Of particular interest to this paper is the theorem proved by Rosenberg in an appendix to~\cite{H87}, where he showed that, among discrete groups, only those that are amenable can have quasi-diagonal reduced C*-algebras. The converse became known as \textit{Rosenberg's conjecture}, and was only proved in the spectacular paper~\cite{TWW17} (see also~\cite{G17} for a generalization and~\cite{S17} for an alternative proof). In there, it is proved that nuclear separable C*-algebras that satisfy the UCT and have a faithful trace must be quasi-diagonal. For the case of reduced group C*-algebras note that nuclearity and the UCT follow from the amenability of the group (see, respectively,~\cite[Theorem~2.6.8]{BO08} and~\cite[Proposition~10.7]{Tu99}). Recall, moreover, that any reduced group C*-algebra has a canonical faithful trace (see, for instance,~\cite{BO08,TWW17}). The goal of this note is hence to begin the study of quasi-diagonality in the more general setting of reduced C*-algebras coming from discrete inverse semigroups. In particular, we aim to use the inner structure of the semigroup in order to provide both necessary and sufficient conditions for the quasi-diagonality of $C_r^*(S)$.

  A few key aspects of the point of view of the text are the following. First, note that even though inverse semigroups may behave very differently from groups, they do still retain a certain \textit{group-like} structure (see, for instance,~\cite{G51} or Section~\ref{sec:pre} below), which can be exploited in order to produce quasi-diagonal approximations, among other types of approximations (see, e.g.,~\cite{ALM19,LM21}). Moreover, reduced C*-algebras of inverse semigroups are much wilder than those of groups (see, e.g., the seminal~\cite{E08}). This freedom actually entails we may no longer automatically have a faithful trace, so a new approach is desirable if one wishes to study quasi-diagonality of these algebras. For instance, note that $C_r^*(S)$ might be quasi-diagonal but have no faithful trace, as Example~\ref{ex:qdnotr} below shows. It is our opinion that inverse semigroups might be a suitable framework from whence to study these quasi-diagonality questions.

  We now indicate the main contributions of the paper. The first of these says that if $C^*_r(S)$ is quasi-diagonal then certain subgroups of $S$ must be amenable, which can be seen as a generalization of Rosenberg's contribution of~\cite{H87}.
  \begin{introtheorem}[cf. Theorem~\ref{thm:subgrp}] \label{introthm:subgrp}
    Let $S$ be a discrete inverse semigroup, and let $H \subset S$ be a subgroup whose unit is isolated in the spectrum of $S$. If $C_r^*(S)$ is quasi-diagonal, then $H$ is amenable.
  \end{introtheorem}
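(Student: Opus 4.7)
The plan is to embed $C_r^*(H)$ isometrically into $C_r^*(S)$ and then invoke Rosenberg's theorem (from the appendix of~\cite{H87}), using the fact that quasi-diagonality is inherited by C*-subalgebras. Denote by $e \in \idempotents$ the unit of $H$ and by $\chi_e \in \spectrum$ the associated principal character, determined by $\chi_e(f) = 1$ iff $e \leq f$. The isolation hypothesis asserts that $\{\chi_e\}$ is clopen, and consequently $q_e := \mathds{1}_{\{\chi_e\}}$ is a projection in the canonical copy of $C_0(\spectrum)$ sitting inside $C_r^*(S)$.

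The first observation is that every $h \in H$ fixes $\chi_e$ under the canonical action of $S$ on $\spectrum$ coming from the bisection structure of $\univgrpd$. Indeed, since $h^*h = hh^* = e$, the formula $(h \cdot \chi_e)(f) = \chi_e(h^*(f \wedge e)h)$ reduces to verifying that, for $f \leq e$, one has $h^* f h = e$ iff $f = e$, which follows by conjugating back by $h$. Hence each $h \in H$ defines a germ at $\chi_e$ lying in the isotropy group $\Gamma := \univgrpd_{\chi_e}^{\chi_e}$, and a short calculation shows that the resulting map $H \to \Gamma$ is an injective group homomorphism. Because $\chi_e$ is isolated and $\univgrpd$ is étale, $\Gamma$ is a clopen discrete subgroupoid of $\univgrpd$.

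Next I would verify that the inclusion $\Gamma \hookrightarrow \univgrpd$ induces an isometric embedding $C_r^*(\Gamma) \hookrightarrow C_r^*(S)$, landing inside the corner $q_e C_r^*(S) q_e$. This is a matter of unpacking the regular representation $\lambda_{\chi_e}$ of $C_r^*(S) = C_r^*(\univgrpd)$ on $\ell^2(\univgrpd_{\chi_e})$: elements of $C_r^*(\Gamma)$ act trivially off the invariant subspace $\ell^2(\Gamma) \subset \ell^2(\univgrpd_{\chi_e})$, and their action on $\ell^2(\Gamma)$ coincides with the left-regular representation of the discrete group $\Gamma$. Since the latter is faithful on $C_r^*(\Gamma)$, the embedding is isometric. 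Composing with the standard faithful inclusion $C_r^*(H) \hookrightarrow C_r^*(\Gamma)$ (arising from the subgroup $H \leq \Gamma$) yields the desired $C_r^*(H) \hookrightarrow C_r^*(S)$.

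With the embedding in hand the conclusion is immediate: if $C_r^*(S)$ is quasi-diagonal then so is its C*-subalgebra $C_r^*(H)$, and Rosenberg's theorem then forces $H$ to be amenable. I expect the main technical obstacle to be the isometric-embedding step, particularly if $\univgrpd$ happens to be non-Hausdorff—an unavoidable possibility for general inverse semigroups—where one must take some care that the reduced norm of $C_r^*(S)$ identifies correctly with the left-regular representation of $\Gamma$ on the subspace $\ell^2(\Gamma)$, rather than with some larger completion that could collapse on $C_r^*(\Gamma)$.
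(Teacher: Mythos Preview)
Your strategy is sound and reaches the same endpoint as the paper---an isometric copy of $C_r^*(H)$ inside $C_r^*(S)$, followed by Rosenberg---but the route is genuinely different. The paper works entirely inside $\ell^2(S)$ and never invokes the groupoid picture: it first translates ``$e^\uparrow$ isolated'' into a purely order-theoretic statement (there exist finitely many $f_1,\dots,f_k\le e$ dominating every idempotent below $e$), and from these builds an explicit projection $p\in C_r^*(S)$ by iterated joins $p_i=p_{i-1}+v_{f_i}-p_{i-1}v_{f_i}$, so that $v_e-p$ has range exactly $\ell^2(H_e)$, where $H_e$ is the $\mathcal{H}$-class of $e$. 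A short norm estimate then shows the resulting map $\mathbb{C}H\to C_r^*(S)$ is contractive for the reduced norm on $H$. Your argument instead identifies $q_e=\mathds{1}_{\{\chi_e\}}\in C_0(\spectrum)\subset C_r^*(\univgrpd)$ and realises $C_r^*(H)\subset C_r^*(\Gamma)\subset q_e\,C_r^*(\univgrpd)\,q_e$ via the isotropy group at $\chi_e$; one checks that in fact $\Gamma$ coincides with the $\mathcal{H}$-class $H_e$, so the two embeddings agree.

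What each buys: the paper's approach is self-contained and avoids the identification $C_r^*(S)\cong C_r^*(\univgrpd)$ altogether, so nothing about Hausdorffness ever arises; it also makes the projection witnessing the embedding completely explicit in terms of the generators $v_s$. Your approach is more structural and slots into the general fact that isotropy at an isolated unit of an \'etale groupoid embeds at the reduced level---a statement that does survive non-Hausdorffness here because each point of $\Gamma$ is open in $\univgrpd$ and the representations $\lambda_y$ restrict on $C_c(\Gamma)$ to direct sums of the left-regular representation of $\Gamma$, giving both norm inequalities. Your flagged concern is therefore legitimate to raise but ultimately harmless in this setting.
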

  Furthermore, in Example~\ref{ex:qd-nonfl} we construct an inverse semigroup with quasi-diagonal reduced C*-algebra and with a non-amenable subgroup. As mentioned before, Example~\ref{ex:qdnotr} below shows that $C_r^*(S)$ may be quasi-diagonal but have no faithful trace. This problem has also appeared in the literature, for instance in~\cite[Theorem~6.5]{RS19}, but has been resolved by considering only minimal groupoids, as in those cases every trace is automatically faithful. The same kind of result can be shown from our point of view.
  \begin{introtheorem}[cf. Theorems~\ref{thm:grpdmin} and~\ref{thm:minimalqd}] \label{introthm:minimal}
    Let $S$ be an inverse semigroup whose universal groupoid is minimal (as a groupoid). Then $C_r^*(S)$ is quasi-diagonal if, and only if, every subgroup of $S$ is amenable and $C_r^*(S)$ is finite.
  \end{introtheorem}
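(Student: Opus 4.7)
The plan is to treat the two implications separately: necessity by combining Theorem~\ref{introthm:subgrp} with structural consequences of minimality of $\univgrpd$, and sufficiency by verifying the hypotheses of the Tikuisis--White--Winter theorem from~\cite{TWW17} and invoking it directly.

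For necessity, assume $C_r^*(S)$ is quasi-diagonal. Quasi-diagonal C*-algebras are stably finite, which immediately yields the finiteness half of the conclusion. It suffices to establish amenability of every maximal subgroup $H_e$ at an idempotent $e\in\idempotents$, since any subgroup of $S$ sits inside some $H_e$ and subgroups of amenable groups are amenable. Each $H_e$ arises as (a quotient of) the isotropy of $\univgrpd$ at the principal character $\chi_e$ determined by $e$. Minimality of $\univgrpd$ says that $\spectrum$ is a single orbit, so all isotropy groups are mutually isomorphic via conjugation by bisections. If $\spectrum$ happens to contain an isolated point, Theorem~\ref{introthm:subgrp} applies there and amenability propagates to every $H_e$. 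I would expect Theorem~\ref{thm:grpdmin} to formalise exactly this transfer of amenability across the orbit and, in particular, to cover the case where $\spectrum$ is perfect, where Theorem~\ref{introthm:subgrp} gives no direct information.

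For sufficiency, the plan is to verify the hypotheses of TWW. Amenability of every subgroup of $S$ forces $\univgrpd$ to be an amenable étale groupoid (a standard fact for Paterson's universal groupoid), which gives both nuclearity of $C_r^*(S) = C^*(\univgrpd)$ and the UCT (by~\cite{Tu99}). Countability of $S$ yields separability. Minimality of $\univgrpd$ combined with amenability ensures that every non-zero trace on $C_r^*(S)$ is faithful, and the assumed finiteness of $C_r^*(S)$ supplies a trace via Blackadar--Handelman (quasitraces coincide with traces in the nuclear setting, by Haagerup). With nuclearity, separability, UCT and a faithful trace in hand, TWW delivers quasi-diagonality.

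The main obstacle is the necessity direction when $\spectrum$ has no isolated points, since Theorem~\ref{introthm:subgrp} yields no direct information in that regime; one must instead leverage minimality of $\univgrpd$ to transfer amenability of any one isotropy group along the unique groupoid orbit by a more intrinsic argument, which is presumably the content of Theorem~\ref{thm:grpdmin}. A secondary subtlety in the sufficiency direction is ensuring faithfulness of a trace without imposing Hausdorffness or essential principality of $\univgrpd$ by hand.
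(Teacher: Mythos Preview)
Your plan diverges from the paper's in a key way and has gaps in both directions. The paper's engine is a structure theorem (Theorem~\ref{thm:grpdmin}): minimality of $\univgrpd$ together with finiteness of $C_r^*(S)$ forces $S$ to be either a group or isomorphic to $\idempotents^+ \times H \times \idempotents^+ \sqcup \{0\}$, and in particular forces $\spectrum$ to be \emph{discrete}. This dissolves your perfect-spectrum worry entirely---that case never arises---and also repairs your claim that all isotropy groups are conjugate, which is false for minimal groupoids in general (minimal does not mean transitive) but holds here because discreteness plus minimality gives a single orbit. Necessity then follows directly from Theorem~\ref{introthm:subgrp}, since every maximal subgroup is isolated.

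For sufficiency the paper does \emph{not} feed $C_r^*(S)$ into TWW. It invokes the structure theorem again, applies TWW only to the amenable group $H$, and then assembles quasi-diagonalizing projections for $C_r^*(S)$ by hand, conjugating the group projections across the $\mathcal{H}$-classes via the left and right regular representations. Your direct-TWW plan has a real obstruction: when $\idempotents^+$ is infinite, $C_r^*(S)$ is non-unital and carries no faithful tracial state (morally it contains a copy of $\mathcal{K}(\ell^2(\idempotents^+))$, which already kills any bounded faithful trace; compare Example~\ref{ex:qdnotr}). The assertion that minimality plus amenability makes every non-zero trace faithful is neither established nor true here, since $C_r^*(S)$ is not simple once $H$ is non-trivial. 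You could salvage sufficiency by first using the structure theorem and then appealing to permanence of quasi-diagonality under tensoring with $\mathcal{K}$, but at that point the structure theorem is doing the essential work, exactly as in the paper.
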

  The last main contribution of the paper gives an explicit construction (modulo group C*-algebras) of quasi-diagonal approximations in a large class of inverse semigroups (for the definition of the $\mathcal{D}$-relation see Section~\ref{sec:pre}).
  \begin{introtheorem}[cf. Theorem~\ref{thm:dclasses}] \label{introthm:dclasses}
    Let $S$ be a discrete inverse semigroup. Suppose every subgroup of $S$ is amenable and no $\mathcal{D}$-class contains infinitely many idempotents. Then $C_r^*(S)$ is quasi-diagonal.
  \end{introtheorem}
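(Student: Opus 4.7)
The plan is to construct explicit finite-dimensional quasi-diagonal approximations of $C_r^*(S)$ by exploiting the egg-box structure of each $\mathcal{D}$-class. Given a finite set $F \subset S$ and $\varepsilon > 0$, the goal is a ucp map $\varphi : C_r^*(S) \to M_N$ that is $\varepsilon$-multiplicative and $\varepsilon$-isometric on $\operatorname{span}\{\delta_s : s \in F\}$. The two hypotheses supply the two ingredients: amenability of every subgroup lets us approximate each maximal-subgroup C*-algebra via the Tikuisis--White--Winter theorem, while finiteness of idempotents in every $\mathcal{D}$-class lets us restrict attention to a finite-dimensional target.

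First I would enlarge $F$ to be closed under adjoints and pairwise products, and let $\mathcal{F}$ denote the (finite) collection of $\mathcal{D}$-classes meeting $F$. For each $D \in \mathcal{F}$, the hypothesis gives only finitely many idempotents $e_1^D, \ldots, e_{n_D}^D \in D$, and the classical egg-box description writes $D$ as an $n_D \times n_D$ grid of $\mathcal{H}$-classes each in bijection with the common maximal subgroup $H_D$. Fix matrix units $s_{ij}^D \in D$ so that every element of $D \cap F$ takes the form $s_{ij}^D h$ for a unique $h$ in a finite subset $F_D \subset H_D$. Since $H_D$ is amenable, $C_r^*(H_D)$ is nuclear, satisfies the UCT and carries a faithful trace, so by the Tikuisis--White--Winter theorem it is quasi-diagonal; choose ucp maps $\psi_D : C_r^*(H_D) \to M_{k_D}$ that are $\delta$-multiplicative and $\delta$-isometric on $F_D$ for a sufficiently small $\delta > 0$.

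Next I would assemble a single ucp map $\varphi : C_r^*(S) \to \bigoplus_{D \in \mathcal{F}} M_{n_D} \otimes M_{k_D}$ sending $\delta_s$ with $s = s_{ij}^D h$ to the $D$-component $e_{ij} \otimes \psi_D(\delta_h)$ and vanishing outside $\bigcup_{D \in \mathcal{F}} D$. The approximation properties on products within a single $\mathcal{D}$-class reduce directly to those of $\psi_D$, while interactions across different $\mathcal{D}$-classes are controlled by the fact that if $s \in D_1$, $t \in D_2$ and $D_1 \neq D_2$, then either $st$ vanishes in the egg-box sense or it lies in a $\mathcal{D}$-class strictly below both $D_1$ and $D_2$ in the natural partial order, which can be absorbed by induction on depth in the finite poset $\mathcal{F}$.

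The main obstacle will be arranging the matrix units $s_{ij}^D$ coherently across $\mathcal{F}$ so that the above assembly map is genuinely completely positive and faithfully tracks the natural partial order of $S$. In particular, when $D' \le D$ in the order on $\mathcal{D}$-classes, the matrix units in $D'$ must be chosen so that their products with those in $D$ land in the correct cells of the smaller egg-box; making this work simultaneously across the finite poset $\mathcal{F}$ is precisely where the hypothesis $n_D < \infty$ is essential. Once such a coherent system of matrix units is in place, the asymptotic multiplicativity and isometry of $\varphi$ reduce to standard estimates pulling back the corresponding properties of the $\psi_D$, and letting $F \nearrow S$ and $\varepsilon \searrow 0$ yields the desired quasi-diagonal approximations.
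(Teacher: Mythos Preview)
Your overall strategy---invoke TWW on each maximal subgroup $H_D$ and assemble via the egg-box picture---is the same as the paper's, but your execution has a genuine gap, and the paper's route avoids it entirely.

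The key fact you are missing is that each $\mathcal{L}$-class, and hence each $\mathcal{D}$-class $D$, yields an \emph{invariant} subspace $\ell^2(D)\subset\ell^2(S)$ for the left regular representation: if $s^*sx=x$ then $(sx)^*(sx)=x^*s^*sx=x^*x$, so $v_s$ never moves a basis vector out of its $\mathcal{L}$-class. Consequently there are no cross-$\mathcal{D}$-class interactions to manage at the operator level. The paper exploits this by working with the concrete Halmos definition: for each $D$ it builds finite-rank projections $q_n^D\le 1_{\ell^2(D)}$ by conjugating a quasi-diagonalising sequence $p_n$ for $C_r^*(H_D)$ around the egg-box via the left \emph{and right} regular representations, checks directly that $\|v_sq_n^D-q_n^Dv_s\|\to 0$ for \emph{every} $s\in S$ (not just $s\in D$), and then simply sets $q_n=\sum_D q_n^D$. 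Your last two paragraphs---coherent systems of matrix units across the poset $\mathcal{F}$, induction on depth---are attacking a difficulty that does not exist once one works on $\ell^2(S)$.

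In your Voiculescu-style formulation, by contrast, the cross-class problem is real and your sketch does not resolve it. With $\varphi$ landing block-diagonally in $\bigoplus_D M_{n_D}\otimes M_{k_D}$ you have $\varphi(v_s)\varphi(v_t)=0$ whenever $s,t$ lie in distinct $\mathcal{D}$-classes, yet if $st\in F$ (recall you closed $F$ under products) you also need $\|\varphi(v_{st})\|\approx 1$; these requirements are incompatible. Your claim that $st$ lies \emph{strictly} below both $D_1$ and $D_2$ is also false: take $s=e$ idempotent with $tt^*\le e$, so $st=t\in D_2$. Separately, you never argue that the assignment $\delta_s\mapsto e_{ij}\otimes\psi_D(\delta_h)$ extends to a bounded, let alone completely positive, map on $C_r^*(S)$; prescribing values on generators does not produce a map on the C*-algebra. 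The clean fix is precisely the paper's: compress by finite-rank projections on $\ell^2(S)$ that respect the invariant decomposition $\ell^2(S)=\bigoplus_D\ell^2(D)$.
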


  The paper is structured as follows. Section~\ref{sec:pre} recalls the necessary background of the paper, including definitions and some theorems used throughout the text. Section~\ref{sec:subgrp} then proves Theorem~\ref{introthm:subgrp}, while Section~\ref{sec:minimal} is devoted to Theorem~\ref{introthm:minimal}. Note we regard both Theorems~\ref{introthm:subgrp} and~\ref{introthm:minimal} as necessary conditions for the quasi-diagonality of $C_r^*(S)$. Lastly, Section~\ref{sec:dclasses} proves Theorem~\ref{introthm:dclasses}.

  \textbf{Conventions:} throughout the text $S$ will denote a discrete inverse semigroup with a zero element, denoted by $0$. The set of idempotents, or projections, of $S$ will be denoted by $\idempotents$, while $\univgrpd$ will be the universal groupoid associated to $S$. Moreover, $\spectrum$ shall denote the unit space of $\univgrpd$, that is, the space of filters of $\idempotents$ equipped with pointwise convergence. Lastly, $\mathcal{B}(\ell^2(S))$ will denote the algebra of bounded linear operators on the complex separable Hilbert space $\ell^2(S)$, which is formed by $2$-summable complex sequences indexed by $S$. The usual orthonormal basis of $\ell^2(S)$ shall be denoted by $\left\{\delta_x\right\}_{x \in S}$.

  \textbf{Acknowledgements:} part of this research was conducted while visiting Texas A\&M invited by David Kerr, to whom the author is deeply grateful. Moreover, the author is also thankful to the anonymous referees for their helpful comments on a previous version of the manuscript. 

  \section{Preliminaries} \label{sec:pre}
  Inverse semigroups were introduced independently by Wagner (see~\cite{W52,W53}) and Preston (see~\cite{P54a,P54b,P54c}) as a generalization of groups. While elements in a group are usually thought of as symmetries of a space, or as bijections of a set, elements of an inverse semigroup can be thought of as partial symmetries of a space, or as partial bijections of a set. For the following, recall that a \textit{semigroup} is a set $S$ equipped with a binary and associative operation.
  \begin{definition}
    A semigroup $S$ is \textit{inverse} if for all $s \in S$ there is a unique $s^* \in S$ such that $ss^*s = s$ and $s^*ss^* = s^*$.
  \end{definition}
  We say an element $e \in S$ is an \textit{idempotent}, or \textit{projection}, if $e^2 = e$. It is well known that the set $\idempotents$ of idempotents of $S$ forms a commutative subsemigroup (see, for instance,~\cite[Theorem~3]{L98}). Moreover, note that we may equip $S$ with a natural partial order, where we say $s \geq t$ if there is some $e \in \idempotents$ such that $se = t$. Observe, in addition, this is equivalent to the existence of an $f \in \idempotents$ such that $fs = t$. Furthermore, the partial order $\leq$, when restricted to idempotents, can be reformulated to $e \leq f$ if $ef = e$, which we usually describe as $e$ being contained in $f$. The \textit{unit} of $S$, which, if it exists, we denote by $1$, is the unique element such that $1 \cdot s = s \cdot 1 = s$ for every $s \in S$. Likewise, the \textit{zero} of $S$, which we denote by $0$, is the unique element such that $0 \cdot s = s \cdot 0 = 0$ for every $s \in S$. Observe that we do not require $S$ to be unital, but we do require it to contain a zero element throughout the text.

  The so-called \textit{Green's relations} (see, e.g.,~\cite{G51,H76,H95,L98}) are useful tools in the study of semigroup theory, and have a particularly simple description for inverse semigroups. In this way, given $s, t \in S$, we say that $s \lrel t$ if $s^*s = t^*t$, and $s \rrel t$ if $ss^* = tt^*$. Likewise, Green's relation $\mathcal{H}$ is defined as $\mathcal{L} \cap \mathcal{R}$, while $\mathcal{D}$ is the join of $\mathcal{L}$ and $\mathcal{R}$. For instance, it is routine to show that two idempotents $e, f \in E(S)$ are $\mathcal{D}$-related precisely when $e = s^*s$ and $f = ss^*$ for some $s \in S$. All $\mathcal{L}, \mathcal{R}, \mathcal{H}$ and $\mathcal{D}$ are easily seen to be equivalence relations, and, thus, their equivalence classes will be called \textit{$\mathcal{L}, \mathcal{R}, \mathcal{H}$} or \textit{$\mathcal{D}$-classes}, respectively.

  We next point out two important classes of examples for the context of this paper.
  \begin{example} \label{ex:bisimple}
    Let $\idempotents^+$ be a non-empty set, and let $H$ be a discrete group. Then the set $S := \idempotents^+ \times H \times \idempotents^+ \sqcup \{0\}$ can be equipped with the operation given by
    \begin{equation}
      (f_2, h_2, e_2)(f_1, h_1, e_1) := 
            \left\{
              \begin{array}{rl}
                (f_2, h_2 h_1, e_1) & \text{if} \;\; e_2 = f_1, \\
                0 & \text{otherwise}
              \end{array}
            \right. \nonumber
        \end{equation}
    for every $(f_2, h_2, e_2), (f_1, h_1, e_1) \in \idempotents^+ \times H \times \idempotents^+$. It is not hard to see that then $S$ is an inverse semigroup. Note that, even though this example might seem too simple and ad-hoc, it appears naturally in Section~\ref{sec:minimal}, in particular with regards to Theorems~\ref{thm:grpdmin} and~\ref{thm:minimalqd}.
  \end{example}
  \begin{example} \label{ex:groupbundle}
    Let $\idempotents$ be any semi-lattice (i.e., a partially ordered set with intersection $\wedge$), and let $G_e$ be a discrete group for every $e \in E$. Moreover, suppose there are morphisms $\pi_{e,f} \colon G_e \rightarrow G_f$ for every ordered pair $e \geq f$. Lastly, assume that $\pi_{e_1,e_1}$ is the identity of $G_{e_1}$ and $\pi_{e_1,e_3} = \pi_{e_2, e_3} \circ \pi_{e_1, e_2}$ for every triple $e_1 \geq e_2 \geq e_3$. Then the set $S := \sqcup_{e \in E} G_e$ can be equipped with a natural inverse semigroup structure given by $g_e \cdot h_f := \pi_{e, e \wedge f}(g_e) \pi_{f, e \wedge f}(h_f) \in G_{e \wedge f}$. In particular, these semigroups are called \textit{Clifford semigroups} (see~\cite{L98}). Actually, these are characterized by the fact that every idempotent is central in $S$ or, equivalently, the fact that $\lrel = \rrel = \hrel = \drel$.
  \end{example}

  As we have mentioned, inverse semigroups generalize groups, in the sense that if a group can be seen as a set of bijections of a space, then an inverse semigroup is a set of partial bijections of a space. This point of view can be properly defined by considering the so-called \textit{Wagner-Preston representation} (see~\cite{P54a,W52}). Such representation can be implemented in $\ell^2(S)$ via the \textit{left regular representation} $v$ of $S$, that is:
  \begin{equation}
    v \colon S \rightarrow \mathcal{B}\left(\ell^2\left(S\right)\right), \quad
    v_s \delta_x :=
      \left\{
        \begin{array}{lr}
          \delta_{sx} & \text{if} \; s^*s x = x, \\
          0 & \text{otherwise.}
        \end{array}
      \right. \nonumber
  \end{equation}
  Likewise, the \textit{right regular representation} $w$ of $S$ is defined by multiplying on the right:
  \begin{equation}
    w \colon S \rightarrow \mathcal{B}\left(\ell^2\left(S\right)\right), \quad
    w_s \delta_x :=
      \left\{
        \begin{array}{rl}
          \delta_{xs^*} & \text{if} \; xs^*s = x, \\
          0 & \text{otherwise}.
        \end{array}
      \right. \nonumber
  \end{equation}
  The C*-algebra $C_r^*(S)$ generated by the left regular representation in $\mathcal{B}(\ell^2(S))$ is the \textit{reduced C*-algebra of $S$}, i.e., $C_r^*(S) := C^*(\{v_s\}_{s \in S})$. It is the main object of study of the paper.

  Groupoids have been closely related to inverse semigroups ever since the pioneer work~\cite[Chapter~4]{P12} (see also~\cite{AR00,E08} and references therein). In~\cite{P12}, starting from an inverse semigroup $S$, Paterson constructed a universal groupoid that inherits all its representation theory. This groupoid, which henceforth shall be denoted $\univgrpd$, is usually called either the \textit{universal groupoid} of $S$ or \textit{Paterson's groupoid} associated to $S$. We now outline its construction (see~\cite[Chapter~4]{P12} for a detailed discussion). Given an inverse semigroup $S$ with zero, let $\idempotents$ be its set of idempotents equipped with the natural partial order $\geq$. Recall that a \textit{filter} is a non-empty $\xi \subset \idempotents$ that is non-trivial, i.e., $0 \not\in S$; upwards closed, i.e., if $e \in \xi$ and $e \leq f$ then $f \in \xi$; and closed under multiplication, i.e., if $e, f \in \xi$ then $ef \in \xi$. The \textit{spectrum} of $S$, which we denote by $\spectrum$, is the set of filters equipped with pointwise convergence, or, equivalently, the relative topology of $\spectrum$ as a subset of $\{0, 1\}^\idempotents$. It is routine to show that the space $\spectrum$ is a locally compact, totally disconnected, second countable Hausdorff space. Letting $D_e^\theta \subset \spectrum$ be the set of filters that contain $e$, the semigroup $S$ acts on $\spectrum$ in the following way:
  \begin{equation}
    \theta_s \colon D_{s^*s}^\theta \rightarrow D_{ss^*}^\theta, \;\; \theta_s\left(\xi\right) := \left\{e \in \idempotents \mid e \geq sfs^* \;\, \text{for some} \,\; f \in \xi\right\}. \nonumber
  \end{equation}
  The \textit{universal groupoid of $S$} is the groupoid of germs of this action, that is
  \begin{equation}
    \univgrpd := \left\{\left[s, \xi\right] \;\; \text{where} \;\; s \in S \;\, \text{and} \,\; \xi \in D_{s^*s}^\theta\right\}, \nonumber
  \end{equation}
  where two germs $[s, \xi], [t, \zeta] \in \univgrpd$ are equal if, and only if, $\xi = \zeta$ and $se = te$ for some $e \in \idempotents$ such that $e \in \xi$. Lastly, we shall always equip $\univgrpd$ with the topology generated by sets of the form $\{[s, \xi] \mid \xi \in U\}$, where $s \in S$ and $U \subset D_{s^*s}^\theta$ is open.

  Of particular interest for the purposes of this text is the natural map $S \rightarrow \univgrpd$ sending $s \mapsto [s, (s^*s)^\uparrow]$, where $e^\uparrow := \{f \in \idempotents \mid f \geq e\} \in \spectrum$. Henceforth, we may tacitly consider $S$ or $\idempotents$ as contained in $\univgrpd$, particularly in Lemma~\ref{lemma:isolated} and the subsequent Definition~\ref{def:isolatedsbgp}.

  We end the section recalling the necessary background for C*-algebras, mostly about \textit{quasi-diagonality} and related topics. The following goes back to Halmos~\cite{H69}.
  \begin{definition} \label{def-qd}
    We say a separable C*-algebra $A$ is \textit{quasi-diagonal} if it has a faithful representation $\pi \colon A \rightarrow \mathcal{B}(\mathcal{H})$ such that there is a sequence $\{p_n\}_{n \in \mathbb{N}} \subset \mathcal{B}(\mathcal{H})$ of finite rank orthogonal projections that asymptotically centralize $A$ and converge strongly to the identity.
  \end{definition}
  It is well known that abelian C*-algebras are quasi-diagonal, as is the algebra of compact operators of a Hilbert space. More interestingly, the following was proved in the appendix to~\cite{H87} and~\cite[Corollary~C]{TWW17}.
  \begin{theorem} \label{thm:tww:groups}
    A discrete group $G$ is amenable if, and only if, $C_r^*(G)$ is quasi-diagonal.
  \end{theorem}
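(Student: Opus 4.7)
The equivalence splits into two implications of very different character, and I would address them separately.

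For the implication $(\Leftarrow)$, the strategy is to reduce the claim to the main theorem of~\cite{TWW17}. Concretely, I would verify that $C_r^*(G)$ meets all four hypotheses of that result. Separability is immediate, since $G$ is countable. Nuclearity of $C_r^*(G)$ follows from Lance's theorem, cited in the introduction as~\cite[Theorem~2.6.8]{BO08}. The UCT follows from Tu's theorem, cited as~\cite[Proposition~10.7]{Tu99}. Finally, the canonical trace $\tau(x):=\langle x\delta_e,\delta_e\rangle$ is well known to be faithful on $C_r^*(G)$, using the natural conditional expectation onto the span of $\delta_e$. With these four ingredients in place, \cite[Corollary~C]{TWW17} immediately yields that $C_r^*(G)$ is quasi-diagonal.

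For the implication $(\Rightarrow)$, the plan follows Rosenberg's original idea from the appendix to~\cite{H87}. Assume $C_r^*(G)$ is quasi-diagonal, let $\lambda$ denote the left regular representation of $G$ on $\ell^2(G)$, and fix a sequence $\{p_n\}_{n\in\mathbb{N}}$ of finite rank projections on $\ell^2(G)$ that asymptotically commute with $\lambda_g$ for every $g\in G$ and converge strongly to the identity. I would normalize $\xi_n := p_n/\|p_n\|_{\mathrm{HS}}$ and view it as a unit vector in the Hilbert--Schmidt space $\mathrm{HS}(\ell^2(G)) \cong \ell^2(G)\otimes\overline{\ell^2(G)}$. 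A short Hilbert--Schmidt computation shows that asymptotic commutativity translates into $\|\lambda_g \xi_n \lambda_g^{*} - \xi_n\|_{\mathrm{HS}}\to 0$ for every fixed $g\in G$. Since the conjugation action $\mathrm{Ad}\,\lambda_g$ corresponds under the above identification to $\lambda_g\otimes\overline{\lambda_g}$, this shows that $\lambda\otimes\overline{\lambda}$ has almost invariant unit vectors. By Fell's absorption principle $\lambda\otimes\overline{\lambda}$ is quasi-equivalent to an amplification of $\lambda$, so the trivial representation is weakly contained in $\lambda$, and Hulanicki's theorem forces $G$ to be amenable.

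The main obstacle is clearly the sufficiency direction: while the necessity direction reduces to the Hilbert--Schmidt calculation sketched above together with standard weak-containment principles, the sufficiency direction genuinely requires the quasi-diagonality theorem of Tikuisis, White, and Winter, which is a deep result of modern classification theory that I would treat as a black box. The only mildly delicate verification on the way to applying it is that the canonical trace on $C_r^*(G)$ is faithful, and for this the conditional expectation onto $\mathbb{C}\delta_e$ is the standard tool.
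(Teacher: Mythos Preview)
The paper does not actually prove this theorem; it is stated in the preliminaries as a known result, with the two implications attributed respectively to Rosenberg's appendix in~\cite{H87} and to~\cite[Corollary~C]{TWW17}. Your sketch is mathematically correct and matches those cited sources exactly: the Rosenberg direction via Hilbert--Schmidt almost-invariant vectors and weak containment, and the Tikuisis--White--Winter direction via nuclearity, the UCT, and the canonical faithful trace. There is nothing further to compare against.

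One cosmetic point: you have the labels $(\Leftarrow)$ and $(\Rightarrow)$ interchanged. As the statement reads ``$G$ is amenable if, and only if, $C_r^*(G)$ is quasi-diagonal'', the forward implication $(\Rightarrow)$ is ``amenable $\Rightarrow$ quasi-diagonal'' (the TWW direction, where amenability is used to obtain nuclearity and the UCT), while $(\Leftarrow)$ is ``quasi-diagonal $\Rightarrow$ amenable'' (the Rosenberg direction). Your two paragraphs are attached to the opposite arrows; the content itself is fine.
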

  As Brown and Ozawa mention in~\cite[Chapter~7]{BO08}, there are two main obstructions to quasi-diagonality. The most relevant one for the purposes of this paper is that quasi-diagonal C*-algebras are stably finite (see~\cite[Proposition~7.1.15]{BO08}).
  \begin{proposition} \label{prop:qd-sf}
    Every quasi-diagonal C*-algebra is stably finite.
  \end{proposition}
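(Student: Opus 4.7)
The plan is to reduce stable finiteness to finiteness of $A$ itself---that every isometry in the unitization $\tilde{A}$ is already a unitary---and then to obtain a contradiction by compressing a hypothetical non-unitary isometry against the quasi-diagonalising projections. The reduction is immediate: since $M_n(A) \cong A \otimes M_n(\mathbb{C})$, a faithful quasi-diagonalising representation of $A$ on $\mathcal{H}$ tensors up to one of $M_n(A)$ on $\mathcal{H} \otimes \mathbb{C}^n$ whose compressions $p_k \otimes \mathrm{id}$ still asymptotically commute with $M_n(A)$ and converge strongly to the identity. Hence it suffices to show that any $v \in \tilde{A}$ with $v^*v = 1$ also satisfies $vv^* = 1$.

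Suppose for contradiction that $v^*v = 1$ while $vv^* \neq 1$, and fix a faithful representation $\pi \colon \tilde{A} \to \mathcal{B}(\mathcal{H})$ with quasi-diagonalising projections $\{p_n\}$. The finite-rank compressions $a_n := p_n \pi(v) p_n$ live in the finite-dimensional algebra $p_n \mathcal{B}(\mathcal{H}) p_n$. Asymptotic commutativity of $\{p_n\}$ with $\pi(v)$ and $\pi(v^*)$, together with $v^*v = 1$, yields $\|a_n^* a_n - p_n\| \to 0$. Since $a_n$ acts on the finite-dimensional space $p_n \mathcal{H}$, the operators $a_n^* a_n$ and $a_n a_n^*$ share the same non-zero spectrum, so one automatically obtains $\|a_n a_n^* - p_n\| \to 0$ as well. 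One further asymptotic-commutativity rewrite identifies $a_n a_n^*$ with $p_n \pi(vv^*) p_n$ modulo a norm-vanishing error, hence $\|p_n \pi(1 - vv^*) p_n\| \to 0$.

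To close the contradiction, set $e := 1 - vv^*$, which is a non-zero projection in $\tilde{A}$; by faithfulness $\pi(e)$ is a non-zero projection in $\mathcal{B}(\mathcal{H})$. Pick a unit vector $\xi \in \pi(e)\mathcal{H}$ and use $p_n \to I$ strongly to conclude $p_n \pi(e) p_n \xi \to \pi(e) \xi = \xi$, so $\liminf \|p_n \pi(e) p_n\| \geq 1$, in direct contradiction with the bound obtained in the previous paragraph.

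The main obstacle---and the one place where the finite rank of the $p_n$ is used decisively, rather than merely their asymptotic commutativity---is the step turning an \emph{approximate isometry} into an \emph{approximate co-isometry}. This conversion fails wildly in infinite dimensions (the unilateral shift is a proper isometry on $\ell^2(\mathbb{N})$) and succeeds here only because $p_n \mathcal{H}$ is finite-dimensional, letting one invoke the equality of non-zero singular-value spectra of $a_n$ and $a_n^*$. The non-unital case is handled transparently by extending $\pi$ to $\tilde{A}$ and noting that asymptotic commutativity of $\{p_n\}$ with $\pi(A)$ passes automatically to $\pi(A) + \mathbb{C} I$.
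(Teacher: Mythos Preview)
The paper does not supply its own proof of this proposition; it simply records the statement and defers to \cite[Proposition~7.1.15]{BO08}. Your argument is correct and is essentially the standard one found there: compress a hypothetical proper isometry by the finite-rank quasi-diagonalising projections, exploit that on a finite-dimensional space $a^*a$ and $aa^*$ share the same spectrum to upgrade an approximate isometry to an approximate co-isometry, and contradict the strong convergence $p_n \to I$. Your handling of the non-unital case via unitisation and of stable finiteness via $M_n(A) \cong A \otimes M_n(\mathbb{C})$ is likewise routine and sound.
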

  Lastly, the following proposition includes some sufficient conditions that guarantee stable finiteness. The proof of the first assessment can be found in~\cite[Proposition~V.2.1.8]{Bl13}, while the second is known to experts.
  \begin{proposition} \label{prop:sf}
    Let $A$ be a separable C*-algebra.
    \begin{enumerate}
      \item If $A$ is an inductive limit of unital stably finite C*-algebras then $A$ itself is stably finite.
      \item Let $p \in A$ be a projection. If $A$ is finite (resp. stably finite), then so is $pAp$.
    \end{enumerate}
  \end{proposition}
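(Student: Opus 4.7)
The plan is to rely throughout on the standard characterisation that a C*-algebra $B$ is finite if and only if no projection of $B$ is Murray--von Neumann equivalent to a proper subprojection of itself, and that $B$ is stably finite if and only if the same property holds in every matrix amplification $M_n(B)$. I would treat (2) first, as a direct algebraic manipulation of projections inside $A$, and then turn to (1), whose proof is genuinely analytic and goes through a perturbation argument along the inductive system.

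For (2), observe that every projection of $pAp$ is a projection of $A$ dominated by $p$, and conversely; moreover, any partial isometry $x \in pAp$ realising $x^*x = q_1$, $xx^* = q_2$ with $q_1 \leq q_2 \leq p$ witnesses the equivalence $q_1 \sim q_2$ already inside $A$. Hence finiteness of $A$ forces $q_1 = q_2$, so $pAp$ is finite. For the stably finite case, identify $M_n(pAp) \cong (1_n \otimes p)\,M_n(A)\,(1_n \otimes p)$, note that $1_n \otimes p$ is a projection in $M_n(A)$, and apply the previous paragraph with $M_n(A)$, which is finite by hypothesis, in place of $A$.

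For (1), write $A = \varinjlim (A_i, \varphi_{ij})$ with canonical maps $\varphi_i\colon A_i \to A$, and fix $n \geq 1$. Since $M_n(A) = \varinjlim M_n(A_i)$ and each $M_n(A_i)$ remains unital and stably finite, it is enough to prove that $A$, or its unitisation if $A$ is non-unital, contains no infinite projection. Given a projection $P$ together with $Q \leq P$ realised by a partial isometry $V$ with $V^*V = P$ and $VV^* = Q$, I would approximate $P$ and $V$ arbitrarily well by elements of $\varphi_i(A_i^\sim)$, apply functional calculus (via a continuous cut-off function equal to $0$ near $0$ and $1$ near $1$) to promote a self-adjoint approximant of $P$ to an honest projection $p_i \in A_i^\sim$, and polar-decompose a close approximant of $V$ to produce an honest partial isometry $v_i \in A_i^\sim$ with $v_i^*v_i = p_i$ and $v_iv_i^* = q_i \leq p_i$. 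Stable finiteness of $A_i$ then forces $p_i = q_i$, and pushing this equality back to $A$ via $\varphi_i$ and letting $i \to \infty$ yields $P = Q$. The main obstacle is precisely this simultaneous lifting: one needs $p_i$, $q_i$, and $v_i$ to be compatible in the sense that $\varphi_i(p_i)$, $\varphi_i(q_i)$, $\varphi_i(v_i)$ are unitarily conjugate to $P$, $Q$, $V$ in $A^\sim$ via a unitary close to the identity, which is the standard but slightly technical content of~\cite[Proposition~V.2.1.8]{Bl13}.
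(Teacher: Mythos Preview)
Your proposal is correct. The paper does not actually supply its own proof of this proposition: it simply refers to \cite[Proposition~V.2.1.8]{Bl13} for part~(1) and declares part~(2) to be ``known to experts''. Your sketch for~(2) is the standard corner argument (and the identity $M_n(pAp)\cong (1_n\otimes p)M_n(A)(1_n\otimes p)$ reduces the stably finite case to the finite one), while your sketch for~(1) is precisely the perturbation/lifting argument underlying the cited result in Blackadar, which you yourself reference at the end. So you are supplying more detail than the paper, but along exactly the same lines.
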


  \section{Subgroups of quasi-diagonal inverse semigroups} \label{sec:subgrp}
  This section gives the first necessary condition for $C_r^*(S)$ to be quasi-diagonal. Particularly, in Theorem~\ref{thm:subgrp} we prove that certain subgroups of $S$ must be amenable whenever $C_r^*(S)$ is quasi-diagonal. For the following, observe that if $H \subset S$ is a subgroup then $H \cap \idempotents$ has exactly $1$ element, namely the unit of $H$.
  \begin{lemma} \label{lemma:isolated}
    Let $H \subset S$ be a subgroup, and let $e \in H \cap \idempotents$. The following are equivalent:
    \begin{enumerate}
      \item \label{lemma:isolated:isolated} The filter $e^\uparrow$ is an isolated point of $\spectrum$.
      \item \label{lemma:isolated:idempotents} There are idempotents $f_1, \dots, f_k \in E$ such that $f_i \leq e$ for every $i = 1, \dots, k$ and for every $f \in E$ such that $f \leq e$ there is some $i_0 \in \{1, \dots, k\}$ such that $f \leq f_{i_0}$.
    \end{enumerate}
  \end{lemma}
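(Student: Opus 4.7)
The plan is to use the description of the topology on $\spectrum$: as a subspace of $\{0,1\}^\idempotents$ with the product topology, basic open neighborhoods of a filter $\xi$ take the form $U_F(\xi) := \{\eta \in \spectrum \mid \eta \cap F = \xi \cap F\}$ with $F \subset \idempotents$ finite. Hence $e^\uparrow$ is isolated exactly when some such $F$ makes $U_F(e^\uparrow) = \{e^\uparrow\}$. The key observation, which drives both directions, is that any filter $\eta$ with $e \in \eta$ satisfies $\eta \supseteq e^\uparrow$ by upward closure, and $\eta \neq e^\uparrow$ forces $\eta$ to contain some $g \in \idempotents$ with $g \not\geq e$; closure under multiplication then puts the strict sub-idempotent $eg$ into $\eta$.

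For (\ref{lemma:isolated:idempotents}) $\Rightarrow$ (\ref{lemma:isolated:isolated}) I would simply take $F := \{e, f_1, \dots, f_k\}$ (assuming the $f_i$ are proper sub-idempotents of $e$, as otherwise the statement is vacuous) and verify that $U_F(e^\uparrow) = \{e^\uparrow\}$. Indeed any $\eta \in U_F(e^\uparrow)$ distinct from $e^\uparrow$ would, by the observation above, contain some $h \leq e$ with $h \neq e$; the covering hypothesis then gives $h \leq f_{i_0}$ for some $i_0$, and upward closure forces $f_{i_0} \in \eta$, contradicting the defining condition of $U_F(e^\uparrow)$.

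For the converse (\ref{lemma:isolated:isolated}) $\Rightarrow$ (\ref{lemma:isolated:idempotents}) I would fix a finite $F \subset \idempotents$, which we may assume contains $e$, with $U_F(e^\uparrow) = \{e^\uparrow\}$, and set $f_g := ge$ for each $g \in F$ with $g \not\geq e$; these satisfy $f_g < e$. For any proper sub-idempotent $f$ of $e$, the filter $f^\uparrow$ properly contains $e^\uparrow$ (it has $e$ by upward closure but also contains $f \notin e^\uparrow$), so it lies outside the isolating neighborhood, which forces some such $g$ to belong to $f^\uparrow$, i.e.\ $g \geq f$. Then $f = fe \leq ge = f_g$, yielding the required covering property.

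The main conceptual step, and the one I expect to be the main obstacle, is recognizing that closure under multiplication lets one reduce an \emph{offending} element of a filter $\eta \supsetneq e^\uparrow$ from an arbitrary $g \not\geq e$ to the genuine sub-idempotent $eg$ strictly below $e$. Once that bridge between the pointwise topology on $\spectrum$ and the order structure of idempotents below $e$ is in place, both implications reduce to short combinatorial verifications.
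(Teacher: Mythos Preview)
Your proof is correct and somewhat more explicit than the paper's. You argue both implications directly via the product-topology basis of $\spectrum$: for $(2)\Rightarrow(1)$ you exhibit the isolating neighborhood $U_F(e^\uparrow)$ with $F=\{e,f_1,\dots,f_k\}$; for $(1)\Rightarrow(2)$ you extract the covering family $\{f_g = ge : g\in F,\ g\not\geq e\}$ from a given isolating $F$ and test it against the principal filters $f^\uparrow$. The paper instead proves the contrapositives, invoking the density of principal filters in $\spectrum$ to characterize non-isolation of $e^\uparrow$ as the existence of a sequence $e_n^\uparrow \to e^\uparrow$ with $e_n \leq e$, and then rephrases this as the impossibility of any finite list of strict sub-idempotents of $e$ dominating all the rest. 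Your route is more constructive (it actually names the isolating open set) and avoids the density/sequential detour; the paper's is terser but leaves more to the reader. One small edge case worth noting in your $(1)\Rightarrow(2)$ argument: the filter $f^\uparrow$ is only defined for $f\neq 0$, and the family $\{f_g\}$ could a priori be empty (if every $g\in F$ lies above $e$); but in that situation $U_F(e^\uparrow)=\{e^\uparrow\}$ forces there to be no idempotent strictly between $0$ and $e$, so taking $f_1=0$ completes the covering.
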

  \begin{proof}
    For convenience, we prove that the negation of condition~(\ref{lemma:isolated:isolated}) is equivalent to the negation of~(\ref{lemma:isolated:idempotents}). Note that, as $\idempotents \subset \spectrum$ is dense, a point $e^\uparrow \in \spectrum$ is not isolated if, and only if, there are filters $e_n^\uparrow \in \spectrum$ converging pointwise to $e^\uparrow$. Therefore $e^\uparrow$ is not isolated if, and only if, there are idempotents $e_n \in E$ such that for any idempotent $f \in E$ one has that $f \geq e$ if, and only if, $f \geq e_n$ for all but finitely many $n \in \mathbb{N}$. In particular, note that we may suppose that $e_n \leq e$ for all $n \in \mathbb{N}$. Thus for any finitely many $\{e_1, \dots, e_k\}$ there is some idempotent $e_{k + 1}$ with $e_{k+1} \leq e$ but $e_{k+1} \not\leq e_i$ for all $i = 1, \dots, k$, which proves the claim.
  \end{proof}
  \begin{definition} \label{def:isolatedsbgp}
    We say a subgroup $H \subset S$ is \textit{isolated} if it satisfies any condition of Lemma~\ref{lemma:isolated}.
  \end{definition}

  The following proposition shows the relevance of the isolated subgroups in our context.
  \begin{proposition} \label{prop:isolated}
    Let $S$ be an inverse semigroup, and let $H \subset S$ be a subgroup. If $H$ is isolated then the inclusion $H \subset S$ extends to a canonical embedding $C_r^*(H) \subset C_r^*(S)$.
  \end{proposition}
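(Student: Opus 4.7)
The plan is to exploit the isolated hypothesis to produce a projection $p \in C_r^*(S)$ that cuts out the singleton $\{e^\uparrow\} \subset \spectrum$ (where $e$ denotes the unit of $H$) and then to show that the corner $p C_r^*(S) p$ contains a canonical copy of $C_r^*(H)$, generated by $\{p v_h\}_{h \in H}$. Appealing to Lemma~\ref{lemma:isolated}(\ref{lemma:isolated:idempotents}), I would first fix finitely many pairwise incomparable maximal idempotents $f_1, \dots, f_k$ strictly below $e$, and define
\begin{equation*}
  p := v_e - \bigvee_{i=1}^k v_{f_i} = v_e - \sum_{\emptyset \neq I \subseteq \{1,\dots,k\}} (-1)^{|I|+1} v_{\prod_{i \in I} f_i},
\end{equation*}
which is a projection in the commutative subalgebra of $C_r^*(S)$ generated by $\{v_f\}_{f \in \idempotents}$. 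Under the natural identification of this subalgebra with $C_0(\spectrum)$, $p$ corresponds to $\chi_{\{e^\uparrow\}}$, and therefore acts on $\ell^2(S)$ as the orthogonal projection onto $\ell^2(R_e)$, where $R_e := \{x \in S : xx^* = e\}$.

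The crucial technical step is to verify that $v_h p = p v_h$ in $C_r^*(S)$ for every $h \in H$. Since $h^*h = hh^* = e$, the element $h f_i h^*$ is an idempotent strictly below $e$ (strictness follows because $h f_i h^* = e$ would give $f_i = h^* e h = e$). The finiteness, maximality, and pairwise incomparability of the $f_j$'s then force $h f_i h^* = f_{\sigma_h(i)}$ for a unique permutation $\sigma_h$ of $\{1, \dots, k\}$, yielding $v_h \bigl(\bigvee_i v_{f_i}\bigr) v_h^* = \bigvee_i v_{f_i}$; since this join is dominated by $v_e = v_h v_h^* = v_h^* v_h$, this upgrades to the commutativity $v_h p = p v_h$.

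Granted this commutativity, each $p v_h$ is a unitary in $p C_r^*(S) p$, and $h \mapsto p v_h$ defines a unitary representation of $H$ on $p \ell^2(S) \cong \ell^2(R_e)$. To identify this representation, I would note that the left action of $H$ on $R_e$ is free: if $hx = x$ with $xx^* = e$, then right-multiplying by $x^*$ gives $he = h x x^* = x x^* = e$, so $h = e$. Choosing orbit representatives decomposes $\ell^2(R_e)$ as a Hilbert sum of copies of $\ell^2(H)$ on each of which $p v_h$ acts as the left regular representation $\lambda_h$, whence $\lambda_h \mapsto p v_h$ extends to an isometric $*$-homomorphism $C_r^*(H) \hookrightarrow p C_r^*(S) p \subset C_r^*(S)$ — the desired canonical embedding.

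The step I expect to be the main obstacle is the commutativity $v_h p = p v_h$, which rests on showing that conjugation by elements of $H$ permutes the finite set of maximal proper sub-idempotents of $e$. This combinatorial fact uses both the finiteness supplied by the isolated hypothesis and the group structure of $H$, and once it is in hand the remaining identification of the $H$-action on $R_e$ is essentially direct.
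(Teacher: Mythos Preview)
Your proposal is correct and follows essentially the same strategy as the paper: both build the projection $v_e - \bigvee_i v_{f_i}$, verify that it commutes with $\{v_h\}_{h\in H}$, and then identify the resulting corner representation with (a multiple of) the left regular representation of $H$. Your identification of the image as $\ell^2(R_e)$ rather than $\ell^2(H)$ is in fact more precise than the paper's, and your explicit permutation argument for the commutativity together with the free-action decomposition for the norm equality fill in details the paper leaves as ``tedious but routine.''
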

  \begin{proof}
    By~\cite[Proposition~2.5.9]{BO08} the claim holds when $S$ is a group, and since every subgroup is contained in an $\mathcal{H}$-class we may suppose that $H$ is actually a maximal subgroup of $S$, i.e., an $\mathcal{H}$-class. Let $e \in H \cap E$ be the (necessarily unique) idempotent in $H$, and let $f_1, \dots, f_k \in E$ be those given by Lemma~\ref{lemma:isolated}. Starting with the projection $p_1 = v_{f_1}$, iteratively produce $p_i = p_{i-1} + v_{f_i} - p_{i-1}v_{f_i}$. It is tedious but routine to show that $p = p_k$ commutes with $v_h$ for every $h \in H$ (note that $h$ may not commute with $f_i$ for any $i$). Moreover, observe that $v_e - p$ is also a projection, this time with image $\ell^2(H) \subset \ell^2(S)$. Consider then the representation
    \begin{equation}
      \pi \colon \mathbb{C} H \rightarrow C_r^*(S), \;\; \sum_{h \in H} a_h h \mapsto \left(\sum_{h \in H} a_h v_h\right) (v_e - p), \nonumber
    \end{equation}
    and observe it is sufficient to prove that $\pi$ is continuous with respect to the reduced norm in $\mathbb{C} H$, i.e., $||\pi(a)|| \leq ||a||_r$ for every $a \in \mathbb{C} H$, where $||\cdot||_r$ denotes the norm in $C_r^*(H)$. For this, if $w \in \ell^2(S)$ is such that $||\pi(a) w|| \geq ||\pi(a)|| - \varepsilon$ then $||pw|| \leq \varepsilon$, and therefore we may find a vector $\tilde{w} \in \ell^2(H) = (v_e - p)(\ell^2(S))$ such that $||a \tilde{w}|| \geq ||\pi(a)|| - 2 \, \varepsilon$.
  \end{proof}
  \begin{remark}
    Observe that the key point of Proposition~\ref{prop:isolated} is that there is a projection $q \in C_r^*(S)$, namely $q = v_e - p$, that commutes with $\{v_h\}_{h \in H}$, and whose image is precisely $\ell^2(H)$. This actually holds in a slightly more general setting than that of Proposition~\ref{prop:isolated}. Indeed, for example this argument also holds when there is an idempotent $f < e$ that centralizes $H$ and with no other idempotent in between, i.e., there is no $f'$ such that $f < f' < e$.
  \end{remark}

  With Proposition~\ref{prop:isolated} at hand the following is immediate.
  \begin{theorem} \label{thm:subgrp}
    Let $S$ be a discrete inverse semigroup, and let $H \subset S$ be an isolated subgroup. If $C_r^*(S)$ is quasi-diagonal, then $H$ is amenable.
  \end{theorem}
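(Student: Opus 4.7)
The proof will be immediate from the machinery the author has already built up, so the plan is essentially a short chain of citations rather than a new construction. The strategy is: use Proposition~\ref{prop:isolated} to get an embedding of $C_r^*(H)$ into $C_r^*(S)$, observe that quasi-diagonality passes to C*-subalgebras, and then invoke the Rosenberg half of Theorem~\ref{thm:tww:groups} to conclude that $H$ is amenable.

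In more detail, I would proceed as follows. First, since $H \subset S$ is isolated by hypothesis, Proposition~\ref{prop:isolated} furnishes a canonical embedding $\iota \colon C_r^*(H) \hookrightarrow C_r^*(S)$. Second, I would appeal to the standard permanence property of quasi-diagonality under passage to C*-subalgebras: if $A$ is a quasi-diagonal C*-algebra and $B \subset A$ is a C*-subalgebra, then $B$ is also quasi-diagonal (this follows directly from Definition~\ref{def-qd}, since the same faithful representation of $A$ restricts to a faithful representation of $B$ and the same projections that asymptotically centralize $A$ certainly asymptotically centralize the smaller algebra $B$). Applying this to $\iota(C_r^*(H)) \subset C_r^*(S)$ yields that $C_r^*(H)$ is quasi-diagonal. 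Finally, the implication "$C_r^*(H)$ quasi-diagonal $\Rightarrow$ $H$ amenable" is precisely Rosenberg's direction of Theorem~\ref{thm:tww:groups}, proved in the appendix of~\cite{H87}, so $H$ is amenable as claimed.

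There is really no obstacle left: the conceptual content of the theorem has been absorbed into Proposition~\ref{prop:isolated}, whose construction of the commuting projection $q = v_e - p$ with image $\ell^2(H)$ is what makes $C_r^*(H)$ sit inside $C_r^*(S)$ in a manner compatible with the left regular representation. Once that embedding is in hand, the passage from quasi-diagonality of the ambient algebra to amenability of the subgroup is entirely formal. Accordingly, I would expect the written proof to be only one or two sentences, with no further calculation needed beyond naming the three ingredients above.
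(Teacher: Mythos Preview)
Your proposal is correct and matches the paper's proof essentially verbatim: the paper cites Proposition~\ref{prop:isolated} to obtain $C_r^*(H) \subset C_r^*(S)$, notes that quasi-diagonality passes to sub-C*-algebras, and concludes via Rosenberg's result in~\cite{H87}. Your prediction that the written proof would be one or two sentences is spot on.
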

  \begin{proof}
    By Proposition~\ref{prop:isolated} we have that $C_r^*(H) \subset C_r^*(S)$, and quasi-diagonality clearly passes to sub-C*-algebras. Therefore $H$ is amenable by Rosenberg's contribution of~\cite{H87}.
  \end{proof}
  \begin{remark}
    Example~\ref{ex:qd-nonfl} below shows that the group $H$ needs to be isolated in $S$ for the conclusion to hold. Indeed, it provides an inverse semigroup $S$ with a non-amenable subgroup and quasi-diagonal reduced C*-algebra.
  \end{remark}

  \section{Quasi-diagonal semigroups with minimal universal groupoid} \label{sec:minimal}
  We now study those inverse semigroups $S$ whose universal groupoid $\univgrpd$ is minimal (in the groupoid sense). This minimality condition has, for instance, appeared in~\cite[Theorem~6.5]{RS19}, where it is used to guarantee that a certain constructed trace is faithful. In this section, however, we prove that this approach is not suitable when tackling quasi-diagonality of inverse semigroups, since this method imposes a very restrictive set of conditions on the inverse semigroup $S$ (see Theorem~\ref{thm:grpdmin}). We first need the following lemma, whose proof we outline only for convenience of the reader and future reference.
  \begin{lemma} \label{lemma:hclass}
    Let $S$ be an inverse semigroup, and let $D \subset S$ be a $\mathcal{D}$-class. Any two $\mathcal{H}$-classes in $D$ are in bijective correspondence. Moreover, in case both $\mathcal{H}$-classes are subgroups then the bijection may be taken to be a group isomorphism.
  \end{lemma}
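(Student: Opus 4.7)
The plan is to parametrize the $\mathcal{H}$-classes in $D$ by ordered pairs of $\drel$-related idempotents and then write down an explicit ``conjugation'' bijection between any two such classes.

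First, I would recall the standard structural fact, already recorded in Section~\ref{sec:pre}, that in an inverse semigroup each $\mathcal{H}$-class $H$ contained in $D$ has the form
$$H_{e,f} := \{s \in S : s^*s = e \ \text{and} \ ss^* = f\}$$
for a unique pair $e, f \in \idempotents$, and $H_{e,f}$ is non-empty precisely when $e \drel f$. Fixing $D$, all the idempotents arising as $s^*s$ or $ss^*$ for some $s \in D$ therefore form a single $\drel$-class of $\idempotents$.

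Second, given two $\mathcal{H}$-classes $H_{e_1,f_1}$ and $H_{e_2,f_2}$ inside $D$, the $\drel$-relatedness of the four idempotents guarantees the existence of elements $u, v \in S$ with $u^*u = e_1$, $uu^* = e_2$, $v^*v = f_1$, and $vv^* = f_2$. I would then define
$$\phi \colon H_{e_1,f_1} \longrightarrow H_{e_2,f_2}, \qquad \phi(x) := v x u^*,$$
and verify membership of the image directly, using repeatedly the identity $s = ss^*s$ and the commutation of idempotents: indeed $\phi(x)^*\phi(x) = u x^* (v^*v) x u^* = u x^* f_1 x u^* = u (x^*x) u^* = u e_1 u^* = uu^* = e_2$, and symmetrically $\phi(x)\phi(x)^* = f_2$. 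The formula $y \mapsto v^* y u$ provides an explicit two-sided inverse, giving the desired bijection.

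Third, when both $\mathcal{H}$-classes are (maximal) subgroups, the associated idempotents collapse to a single one per class, namely $e_1 = f_1$ and $e_2 = f_2$, which are the respective group identities. One may then choose $v = u$, and the map $\phi(x) = u x u^*$ becomes multiplicative: for $x, y \in H_{e_1,e_1}$,
$$\phi(x)\phi(y) = u x (u^*u) y u^* = u x e_1 y u^* = u (xy) u^*,$$
using $u^*u = e_1$ together with $e_1 y = y$, which holds since $y^*y = e_1$ forces $e_1 y = (yy^*)(yy^*)y \cdot \dots$ — more simply, $e_1 y = (y^*y) y = y$ via the inverse-semigroup axioms. Combined with the bijectivity from the previous step, this shows $\phi$ is a group isomorphism.

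I do not foresee a genuine obstacle: the content lies essentially in choosing the auxiliary elements $u$ and $v$ correctly so that $\phi$ maps into the intended $\mathcal{H}$-class, and then every verification reduces to routine manipulation of the identities $s = ss^*s$ and the commutativity of $\idempotents$. The only minor care required is observing that for the group-isomorphism statement one must take $v = u$ so that the multiplicative identity $\phi(xy) = \phi(x)\phi(y)$ becomes available; with a different choice of $v$ the map would still be bijective but not automatically a homomorphism.
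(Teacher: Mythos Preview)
Your proposal is correct and follows essentially the same approach as the paper: both arguments pick elements witnessing the $\mathcal{D}$-relation between the relevant idempotents and use them to conjugate one $\mathcal{H}$-class onto another, with the group-isomorphism statement following by taking the two conjugating elements equal. The only cosmetic difference is that the paper fixes a base idempotent $e_0$ and routes every $\mathcal{H}$-class through $H_{e_0}$ via $s \mapsto r_{ss^*}^* \, s \, r_{s^*s}$, whereas you work directly between two arbitrary classes; the underlying computation is the same.
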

  \begin{proof}
    Fix an idempotent $e_0 \in D$ and, given any idempotent $f \in D$, let $r_f \in D$ be such that $r_f^*r_f = e_0$ and $r_f r_f^* = f$. We may further assume that $r_{e_0} = e_0$. Then, letting $H_{e_0}$ be the $\mathcal{H}$-class of $e_0$, it is routine to show that the map
    \begin{equation}
      \phi \colon D \rightarrow H_{e_0}, \;\; \text{where} \;\; s \mapsto r^*_{ss^*} s r_{s^*s} \nonumber
    \end{equation}
    restricts to the desired bijections in every $\mathcal{H}$-class $H \subset D$. Moreover, in case $H$ contains an idempotent (or, equivalently, is a maximal subgroup of $S$), $\phi$ actually restricts to a group isomorphism.
  \end{proof}
  The following proposition is worth mentioning independently of the main result. Recall that being finite is a necessary condition for quasi-diagonality (see Proposition~\ref{prop:qd-sf} above).
  \begin{proposition} \label{prop:finite}
    Let $S$ be an inverse semigroup such that $C_r^*(S)$ is finite. Then the partial order of $S$ restricts to equality in the $\mathcal{D}$-classes, that is, if $s \drel t$ and $s \geq t$ then $s = t$ for every $s, t \in S$.
  \end{proposition}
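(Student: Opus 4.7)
The plan is to translate both the $\mathcal{D}$-relation and the partial order into relations between projections in $C_r^*(S)$, and then let finiteness collapse a Murray--von Neumann equivalence into an honest equality of projections. First, decomposing $s \drel t$ as $s \lrel u \rrel t$ for some $u \in S$ gives $s^*s = u^*u$ and $uu^* = tt^*$. Hence $v_u \in C_r^*(S)$ is a partial isometry whose source projection equals $v_{s^*s} = v_s^*v_s$ and whose range projection equals $v_{tt^*} = v_t v_t^*$. Chaining this with $v_s$ itself, which is a partial isometry from $v_{s^*s}$ to $v_{ss^*}$, yields the Murray--von Neumann equivalences $v_{ss^*} \sim v_{s^*s} \sim v_{tt^*}$ inside $C_r^*(S)$.

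Next, I would note that $s \geq t$ forces $tt^* \leq ss^*$ at the level of idempotents: writing $t = sf$ with $f \in \idempotents$ which (after replacing $f$ by $f s^*s$ if necessary) may be taken to satisfy $f \leq s^*s$, one computes $tt^* = sfs^* \leq ss^*$. Consequently $v_{tt^*} \leq v_{ss^*}$ as projections in $C_r^*(S)$. Since $C_r^*(S)$ is finite, the projection $v_{ss^*}$ is finite, and therefore cannot be Murray--von Neumann equivalent to a proper sub-projection of itself. Combined with the equivalence obtained in the previous paragraph, this forces $v_{tt^*} = v_{ss^*}$, which means $tt^* = ss^*$; in other words, $s \rrel t$.

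To close, I would invoke the standard reformulation $s \geq t \iff t = tt^*s$ of the natural partial order (easily checked from $t = sf$ with $f \leq s^*s$). Substituting the newly obtained equality $tt^* = ss^*$ gives $t = tt^*s = ss^*s = s$, as required. I do not anticipate a serious obstacle here: the whole argument is a careful conversion of semigroup data into C*-algebraic data (using only the immediate identities $v_u^*v_u = v_{u^*u}$, $v_u v_u^* = v_{uu^*}$, and the fact that distinct idempotents of $S$ give distinct projections in $C_r^*(S)$), with finiteness applied at precisely the one spot where an inequality of projections must be promoted to an equality.
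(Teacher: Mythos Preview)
Your proof is correct and takes essentially the same approach as the paper: both convert $s \drel t$ and $s \geq t$ into a Murray--von Neumann equivalence together with an inequality of projections in $C_r^*(S)$, and then let finiteness collapse the inequality to an equality. The paper merely packages this via the corner $v_{s^*s} C_r^*(S) v_{s^*s}$ and Proposition~\ref{prop:sf}~(2), works with the domain idempotents $s^*s, t^*t$ rather than your $ss^*, tt^*$, and leaves the final deduction of $s = t$ from equality of idempotents implicit.
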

  \begin{proof}
    The proof boils down to the fact that finiteness is preserved under taking corners (see Proposition~\ref{prop:sf}). Indeed, suppose there are $s, t \in S$ such that $s \drel t$ and $s > t$. It is not hard to see that then $s^*s \drel t^*t$ and $s^*s > t^*t$. Letting $x \in S$ be such that $s^*s = x^*x$ and $t^*t = xx^*$ yields that the corner $v_{s^*s} C_r^*(S) v_{s^*s}$ is an infinite C*-algebra, and hence (by Proposition~\ref{prop:sf}) so is $C_r^*(S)$. This contradicts the finiteness hypothesis.
  \end{proof}

  Recall that we say an inverse semigroup $S$ is \textit{bisimple} if it has exactly one $\mathcal{D}$-class (see, for instance,~\cite[p.~85]{L98}). Likewise, $S$ is \textit{0-bisimple} if it is either bisimple or has a zero element and exactly two $\mathcal{D}$-classes. Note that in the latter case one of the $\mathcal{D}$-classes is necessarily formed by the zero element alone. Lastly, we say a groupoid $G$ is \textit{minimal} if the range of $G_x$ is dense in $G^{(0)}$ for every unit $x \in G^{(0)}$, where $G_x$ denotes the set of elements whose source is $x$.
  \begin{theorem} \label{thm:grpdmin}
    Let $S$ be a discrete inverse semigroup. The following assertions are equivalent:
    \begin{enumerate}
      \item \label{thm:grpd:minimal} The universal groupoid $\univgrpd$ is minimal and $C_r^*(S)$ is stably finite.
      \item \label{thm:grpd:finite} The universal groupoid $\univgrpd$ is minimal and $C_r^*(S)$ is finite.
      \item \label{thm:grpd:bisimple} $S$ is $0$-bisimple and the partial order in $S$ restricts to equality in the $\mathcal{D}$-classes of $S$.
      \item \label{thm:grpd:struc} Either $S$ is a group or it is isomorphic to $\idempotents^+ \times H \times \idempotents^+ \sqcup \{0\}$, where $H$ is the unique (up to isomorphism) non-trivial maximal subgroup, $\idempotents^+$ is the set of non-zero idempotents and
        \begin{equation}
          (f_2, h_2, e_2)(f_1, h_1, e_1) := 
            \left\{
              \begin{array}{rl}
                (f_2, h_2 h_1, e_1) & \text{if} \;\; e_2 = f_1, \\
                0 & \text{otherwise}
              \end{array}
            \right. \nonumber
        \end{equation}
        for every $(f_2, h_2, e_2), (f_1, h_1, e_1) \in \idempotents^+ \times H \times \idempotents^+$.
    \end{enumerate}
  \end{theorem}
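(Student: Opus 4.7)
The plan is to establish the cycle of implications $(1) \Rightarrow (2) \Rightarrow (3) \Rightarrow (4) \Rightarrow (1)$, of which $(1) \Rightarrow (2)$ is immediate since stable finiteness is stronger than finiteness.

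For $(2) \Rightarrow (3)$, Proposition~\ref{prop:finite} at once supplies the partial-order condition on $\drel$-classes. To show that $S$ is also $0$-bisimple, I would first record that $\theta_s(e^\uparrow) = (ses^*)^\uparrow$ whenever $s^*s \geq e$, so that the $S$-orbit of a principal filter $e^\uparrow \in \spectrum$ equals $\{f^\uparrow : f \in \idempotents,\, f \drel e\}$. Assuming for contradiction that there are non-zero idempotents $e_1, e_2$ in distinct $\drel$-classes, minimality of $\univgrpd$ makes the orbit of $e_1^\uparrow$ dense; pointwise convergence to $e_2^\uparrow$ tested at $e_2$ yields some $f \leq e_2$ with $f \drel e_1$, and necessarily $f < e_2$ since $f$ and $e_2$ sit in different $\drel$-classes. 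Swapping the roles produces $g \leq e_1$ with $g \drel e_2$. Choosing $x \in S$ with $x^*x = e_1$ and $xx^* = f$, the conjugate $g' := xgx^*$ is then an idempotent satisfying $g' \leq f < e_2$ and $g' \drel g \drel e_2$ (the equivalence $g' \drel g$ being witnessed by the element $xg$, whose source is $g$ and range is $g'$), and this directly contradicts Proposition~\ref{prop:finite}.

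For $(3) \Rightarrow (4)$, the hypothesis forces distinct non-zero idempotents $e \neq f$ to be orthogonal: otherwise $ef$ would be a non-zero idempotent in the unique non-zero $\drel$-class that sits below both $e$ and $f$, and partial-order equality would give $ef = e = f$. Fixing $e_0 \in D$ and the connecting elements $r_f$ from Lemma~\ref{lemma:hclass}, every $s \in D$ decomposes uniquely as $s = r_{ss^*}\, h\, r_{s^*s}^*$ with $h$ in the maximal subgroup $H$ at $e_0$, and the Brandt multiplication rule follows from $r_{e_2}^* r_{f_1} = e_0$ when $e_2 = f_1$ and $0$ otherwise; the group case is the degenerate subcase $|\idempotents^+| = 1$. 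Conversely, $(4) \Rightarrow (1)$: the orthogonality of idempotents makes every filter on $\idempotents$ a principal singleton $\{e\}$, so $\spectrum$ is discrete and in bijection with $\idempotents^+$; the transitivity of the $S$-action on $\spectrum$ (since all non-zero idempotents are $\drel$-related) gives minimality of $\univgrpd$, and the tensor decomposition $\ell^2(S \setminus \{0\}) \cong \ell^2(\idempotents^+) \otimes \ell^2(H) \otimes \ell^2(\idempotents^+)$, combined with an explicit computation of the left regular representation, identifies $C_r^*(S) \cong \mathcal{K}(\ell^2(\idempotents^+)) \otimes C_r^*(H)$, which is stably finite since $C_r^*(H)$ admits a faithful trace.

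The main obstacle is $(2) \Rightarrow (3)$: the orbital density argument and Proposition~\ref{prop:finite} are off-the-shelf, but the delicate point is the conjugation step $g' = xgx^*$, which simultaneously manufactures an idempotent $\drel$-equivalent to $e_2$ and strictly below $e_2$. Without this step, minimality alone only produces an infinite descending chain of idempotents with alternating $\drel$-classes, which by itself does not contradict finiteness. The remaining implications are primarily structural computations using Brandt--Rees theory and the explicit form of the left regular representation.
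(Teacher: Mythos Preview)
Your cycle is correct and mirrors the paper's proof closely. For $(2)\Rightarrow(3)$ you run the same conjugation idea the paper uses, only phrased contrapositively: the paper argues directly that for any non-zero $e,f\in\idempotents$ minimality yields $s_n,t_k$ with $(s_nes_n^*)^\uparrow\to f^\uparrow$ and $(t_kft_k^*)^\uparrow\to e^\uparrow$, hence $e\geq t_kft_k^*\geq t_ks_nes_n^*t_k^*\drel e$, so the partial-order condition forces $e=t_kft_k^*$ and thus $e\drel f$; your version isolates the same ``conjugate down and stay in the $\mathcal{D}$-class'' step as the contradiction $g'<e_2$, $g'\drel e_2$. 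The implication $(3)\Rightarrow(4)$ is also the paper's $\Phi$, with your orthogonality-of-idempotents remark making explicit why the multiplication rule drops to $0$.

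The one genuine deviation is $(4)\Rightarrow(1)$: the paper writes $C_r^*(S)$ as an inductive limit of the unital subalgebras $C^*(\langle F_n\rangle)$ and exhibits an explicit faithful tracial state on each, whereas you compute the left regular representation on $\ell^2(\idempotents^+)\otimes\ell^2(H)\otimes\ell^2(\idempotents^+)$ to get $\mathcal{K}(\ell^2(\idempotents^+))\otimes C_r^*(H)$. Your route is cleaner, but note a small imprecision: since every $v_s$ fixes $\delta_0$, the vector $\delta_0$ spans a one-dimensional invariant summand, and one actually obtains $C_r^*(S)\cong\bigl(\mathcal{K}(\ell^2(\idempotents^+))\otimes C_r^*(H)\bigr)\oplus\mathbb{C}$ rather than the bare tensor product. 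This does not affect the conclusion, as the extra $\mathbb{C}$ summand is harmless for stable finiteness.
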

  \begin{proof}
    The implication (\ref{thm:grpd:minimal}) $\Rightarrow$ (\ref{thm:grpd:finite}) is obvious. For (\ref{thm:grpd:finite}) $\Rightarrow$ (\ref{thm:grpd:bisimple}), by Proposition~\ref{prop:finite} we only have to prove that $S$ is $0$-bisimple. Thus, it suffices to show that any two non-zero idempotents $e, f \in \idempotents \setminus \{0\}$ are $\mathcal{D}$-related. As $\univgrpd$ is minimal there are elements $\{s_n\}_{n \in \mathbb{N}}, \{t_k\}_{k \in \mathbb{N}} \subset S$ such that
    \begin{align}
      e^\uparrow \in D_{s_n^*s_n}^\theta \; (\,\text{i.e.} \; s_n^*s_n \geq e) \;\; \text{and} \;\; r\left(\left[s_n, e^\uparrow\right]\right) = \left(s_nes_n^*\right)^\uparrow \xrightarrow{n \rightarrow \infty} f^\uparrow, \nonumber \\
      f^\uparrow \in D_{t_k^*t_k}^\theta \; (\,\text{i.e.} \; t_k^*t_k \geq f) \;\; \text{and} \;\; r\left(\left[t_k, f^\uparrow\right]\right) = \left(t_kft_k^*\right)^\uparrow \xrightarrow{k \rightarrow \infty} e^\uparrow. \nonumber
    \end{align}
    In particular note the latter convergences imply that $t_k^*t_k \geq f \geq s_nes_n^*$ for large $n, k \in \mathbb{N}$, and whence $(s_nes_n^*)^\uparrow \in D_{t_k^*t_k}^\theta$. Therefore
    \begin{equation}
      e \drel s_n e s_n^* \drel t_k s_n e s_n^* t_k^* \;\;\;\; \text{and} \;\;\;\; e \geq t_k f t_k^* \geq t_k s_n e s_n^* t_k^*. \nonumber
    \end{equation}
    Since the partial order $\geq$ restricts to equality in the $\mathcal{D}$-classes it follows that $e = t_k f t_k^* = t_k s_n e s_n^* t_k^*$ which, in particular, proves that $e = (t_k f) (t_k f)^*$ and $f = (t_k f)^* (t_k f)$, as desired.

    (\ref{thm:grpd:bisimple}) $\Rightarrow$ (\ref{thm:grpd:struc}). By Lemma~\ref{lemma:hclass} there is only one (up to isomorphism) non-trivial maximal subgroup of $S$, call it $H$. Then the map
    \begin{equation}
      \Phi \colon S \rightarrow \idempotents^+ \times H \times \idempotents^+ \sqcup \{0\}, \;\; s \mapsto
      \left\{
        \begin{array}{rl}
          \left(ss^*, \phi(s), s^*s\right) & \text{if} \;\; s \neq 0, \\
          0 & \text{if} \;\; s = 0,
        \end{array}
      \right. \nonumber
    \end{equation}
    where $\phi$ is as in Lemma~\ref{lemma:hclass} is an inverse semigroup isomorphism.

    (\ref{thm:grpd:struc}) $\Rightarrow$ (\ref{thm:grpd:minimal}). First, note that $\spectrum$ is discrete, as every filter in $\idempotents$ is of the form $e^\uparrow$. Indeed, let $\xi \in \spectrum$, and suppose that $e_n^\uparrow \rightarrow \xi$. Since no $e_n$ is equal to $0$, observe that $f \in \xi$ if, and only if, $f \geq e_1$ and $f \geq e_2$ for different $e_1 \neq e_2$. Following the inner structure of $S$ in condition~(\ref{thm:grpd:struc}) this is impossible, and hence a sequence $\{e_n^\uparrow\}_{n \in \mathbb{N}} \subset \spectrum$ is Cauchy if, and only if, it is eventually constant. Secondly, observe that $\univgrpd$ is then minimal, since for any pair of idempotents $e, f \in \idempotents^+$ we have that $e = (f, 1, e)^*(f, 1, e)$ and $f = (f, 1, e) (f, 1, e)^*$.

    The fact that $C_r^*(S)$ is stably finite follows from the fact it is the inductive limit of unital C*-algebras with faithful tracial states, and the argument will hence be completed via Proposition~\ref{prop:sf}. By the structure given in~(\ref{thm:grpd:struc}) it is not hard to see that $S$ is finitely generated (modulo $H$) if, and only if, $\idempotents^+$ is a finite set, which is equivalent to $C_r^*(S)$ being unital. Moreover, in such case there are plenty of faithful tracial states. For instance, the functional
    \begin{equation}
      \tau \colon C_r^*(S) \rightarrow \mathbb{C}, \quad \sum_{s \in S} a_s v_s \mapsto \frac{1}{2} a_0 + \left(\frac{1}{2 \left|\idempotents^+\right|} + \frac{1}{2}\right) \sum_{e \in \idempotents^+} a_e \nonumber
    \end{equation}
    can be proven to be a faithful tracial state. In general, when $\idempotents^+$ is infinite we have that $C_r^*(S)$ is no longer unital, but it is the inductive limit of the sub-C*-algebras $C^*(\langle F_n\rangle) \subset C^*_r(S)$, where $F_n \subset S$ are an increasing sequence of finite subsets of $S$.
  \end{proof}

  An immediate consequence of Theorem~\ref{thm:grpdmin} is the opposite implication of Proposition~\ref{prop:finite}.
  \begin{corollary}
    Let $S$ be an inverse semigroup such that $\univgrpd$ is minimal. Then $C_r^*(S)$ is finite if, and only if, the partial order of $S$ restricts to equality in the $\mathcal{D}$-classes.
  \end{corollary}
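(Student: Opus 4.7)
The plan is to reduce the corollary to Theorem~\ref{thm:grpdmin}. The forward direction, namely ``$C_r^*(S)$ finite $\Rightarrow$ the partial order restricts to equality on every $\mathcal{D}$-class'', is literally Proposition~\ref{prop:finite} and does not even need minimality of $\univgrpd$; I would simply invoke it.

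For the converse, I would assume that $\univgrpd$ is minimal and that $\geq$ restricts to equality on each $\mathcal{D}$-class of $S$. My aim is then to verify condition~(\ref{thm:grpd:bisimple}) of Theorem~\ref{thm:grpdmin}, that is, to show that $S$ is $0$-bisimple. Once this is secured, the implication (\ref{thm:grpd:bisimple}) $\Rightarrow$ (\ref{thm:grpd:minimal}) of Theorem~\ref{thm:grpdmin} immediately yields that $C_r^*(S)$ is stably finite, hence finite.

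The key observation powering the converse is that the proof of (\ref{thm:grpd:finite}) $\Rightarrow$ (\ref{thm:grpd:bisimple}) inside Theorem~\ref{thm:grpdmin} never uses finiteness of $C_r^*(S)$ directly; it uses it only as a black box, through Proposition~\ref{prop:finite}, precisely to secure the very ``$\geq$ restricts to equality on $\mathcal{D}$-classes'' statement that we are now assuming outright. I would therefore rerun that argument verbatim: given non-zero idempotents $e, f \in \idempotents$, minimality of $\univgrpd$ produces sequences $\{s_n\}, \{t_k\} \subset S$ with $s_n^*s_n \geq e$, $t_k^*t_k \geq f$ and $(s_n e s_n^*)^\uparrow \to f^\uparrow$, $(t_k f t_k^*)^\uparrow \to e^\uparrow$ in $\spectrum$. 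For large $n, k$ this forces $e \drel s_n e s_n^* \drel t_k s_n e s_n^* t_k^*$ and simultaneously $e \geq t_k f t_k^* \geq t_k s_n e s_n^* t_k^*$, and the partial-order-equality hypothesis collapses the chain to $e = t_k f t_k^*$, yielding $e \drel f$.

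I do not expect a real obstacle: the genuine content is already inside Theorem~\ref{thm:grpdmin}. The only point requiring care is the bookkeeping step, namely explicitly isolating that finiteness of $C_r^*(S)$ enters the proof of (\ref{thm:grpd:finite}) $\Rightarrow$ (\ref{thm:grpd:bisimple}) only through Proposition~\ref{prop:finite}, so that swapping the finiteness hypothesis for the $\mathcal{D}$-class order hypothesis preserves the argument.
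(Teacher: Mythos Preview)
Your proposal is correct and matches the paper's approach: the paper states the corollary as an immediate consequence of Theorem~\ref{thm:grpdmin} (together with Proposition~\ref{prop:finite} for the forward direction), and your write-up simply unpacks that one-line remark, correctly isolating that the $0$-bisimplicity argument in the proof of (\ref{thm:grpd:finite}) $\Rightarrow$ (\ref{thm:grpd:bisimple}) uses finiteness only through the partial-order-equality conclusion of Proposition~\ref{prop:finite}.
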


  Theorem~\ref{thm:grpdmin} also allows to show which inverse semigroups with minimal groupoids have quasi-diagonal reduced C*-algebras. Moreover, it shows the class of inverse semigroups whose universal groupoids meet the hypothesis of~\cite[Theorem~6.5]{RS19} is rather limited (compare the following with Example~\ref{ex:bisimple} above).
  \begin{theorem} \label{thm:minimalqd}
    Let $S$ be an inverse semigroup with minimal universal groupoid $\univgrpd$. The following assertions are equivalent:
    \begin{enumerate}
      \item \label{thm:minimalqd:qd} $C_r^*(S)$ is quasi-diagonal.
      \item \label{thm:minimalqd:subgrp} $C_r^*(S)$ is finite and every subgroup of $S$ is amenable.
      \item \label{thm:minimalqd:struc} Either $S$ is an amenable group or $S$ is isomorphic to $\idempotents^+ \times H \times \idempotents^+ \sqcup \{0\}$, where $H$ is an amenable group, $\idempotents^+$ is a non-empty set and
        \begin{equation}
          (f_2, h_2, e_2)(f_1, h_1, e_1) := 
            \left\{
              \begin{array}{rl}
                (f_2, h_2 h_1, e_1) & \text{if} \;\; e_2 = f_1, \\
                0 & \text{otherwise}.
              \end{array}
            \right. \nonumber
        \end{equation}
    \end{enumerate}
  \end{theorem}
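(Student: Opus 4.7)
The plan is to establish the cycle (\ref{thm:minimalqd:qd}) $\Rightarrow$ (\ref{thm:minimalqd:subgrp}) $\Rightarrow$ (\ref{thm:minimalqd:struc}) $\Rightarrow$ (\ref{thm:minimalqd:qd}), using Theorem~\ref{thm:grpdmin} to relay the structural description between the first two conditions and reserving the real work for the last arrow.

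For (\ref{thm:minimalqd:qd}) $\Rightarrow$ (\ref{thm:minimalqd:subgrp}): Proposition~\ref{prop:qd-sf} yields that $C_r^*(S)$ is stably finite, in particular finite. Together with the minimality of $\univgrpd$, Theorem~\ref{thm:grpdmin} forces $S$ to be either a group (in which case amenability is Theorem~\ref{thm:tww:groups}) or the Brandt-type semigroup $\idempotents^+ \times H \times \idempotents^+ \sqcup \{0\}$. In the latter case the computation in the proof of Theorem~\ref{thm:grpdmin} shows that $\spectrum$ is discrete, so every maximal subgroup (i.e. every $\mathcal{H}$-class of a non-zero idempotent) is isolated in the sense of Definition~\ref{def:isolatedsbgp}. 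Theorem~\ref{thm:subgrp} then gives that every maximal subgroup is amenable; by Lemma~\ref{lemma:hclass} these are all isomorphic to $H$, and since every subgroup of $S$ is contained in a maximal subgroup, all subgroups of $S$ are amenable. The implication (\ref{thm:minimalqd:subgrp}) $\Rightarrow$ (\ref{thm:minimalqd:struc}) is then just Theorem~\ref{thm:grpdmin} used once more: finiteness and minimality of $\univgrpd$ deliver the structural description, and amenability of $H$ is part of the hypothesis.

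The substantive direction is (\ref{thm:minimalqd:struc}) $\Rightarrow$ (\ref{thm:minimalqd:qd}). The group case is immediate from Theorem~\ref{thm:tww:groups}. In the Brandt case I will identify $C_r^*(S)$ (up to at most an auxiliary $\mathbb{C}$-summand coming from the action of every $v_s$ on $\mathbb{C}\delta_0 \subset \ell^2(S)$) with $\mathcal{K}(\ell^2(\idempotents^+)) \otimes C_r^*(H)$. Concretely, decompose $\ell^2(S) = \mathbb{C}\delta_0 \oplus \bigoplus_{e \in \idempotents^+} \ell^2(\idempotents^+ \times H \times \{e\})$ into subspaces invariant under the left regular representation; under the identification $\delta_{(f, h, e)} \leftrightarrow \delta_f \otimes \delta_h$ each non-zero summand becomes $\ell^2(\idempotents^+) \otimes \ell^2(H)$, on which $v_{(f_2, h_2, e_2)}$ acts as the elementary tensor of the rank-one operator $|\delta_{f_2}\rangle\langle\delta_{e_2}|$ and the left regular representation of $h_2 \in H$. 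These subrepresentations are mutually unitarily equivalent, so $C_r^*(S)$ is realised as $\mathcal{K}(\ell^2(\idempotents^+)) \otimes C_r^*(H)$ (possibly plus the $\mathbb{C}$-summand). Quasi-diagonality then follows by writing $\mathcal{K}(\ell^2(\idempotents^+)) \otimes C_r^*(H)$ as the inductive limit of $M_n(C_r^*(H))$ over finite subsets of $\idempotents^+$: each $M_n(C_r^*(H))$ is quasi-diagonal since $C_r^*(H)$ is, by Theorem~\ref{thm:tww:groups}, and quasi-diagonality passes to matrix amplifications and inductive limits.

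The main obstacle is making the identification $C_r^*(S) \cong \mathcal{K}(\ell^2(\idempotents^+)) \otimes C_r^*(H)$ rigorous: one needs to verify that the invariant summands $\ell^2(\idempotents^+ \times H \times \{e\})$ really carry mutually unitarily equivalent subrepresentations (so that $C_r^*(S)$ acts as a single diagonal copy of the tensor product rather than a proper diagonal subalgebra of its amplification), and to handle cleanly, via the inductive limit description, the non-unital situation when $\idempotents^+$ is infinite.
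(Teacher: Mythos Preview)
Your argument is correct. The implications (\ref{thm:minimalqd:qd}) $\Rightarrow$ (\ref{thm:minimalqd:subgrp}) and (\ref{thm:minimalqd:subgrp}) $\Rightarrow$ (\ref{thm:minimalqd:struc}) match the paper's proof essentially verbatim: finiteness from Proposition~\ref{prop:qd-sf}, the structural description from Theorem~\ref{thm:grpdmin}, discreteness of $\spectrum$ making every maximal subgroup isolated, and then Theorem~\ref{thm:subgrp} and Lemma~\ref{lemma:hclass}.

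For (\ref{thm:minimalqd:struc}) $\Rightarrow$ (\ref{thm:minimalqd:qd}) you take a genuinely different route. The paper stays inside $\mathcal{B}(\ell^2(S))$ and builds explicit finite-rank projections
\[
q_n := v_0 + \sum_{e, f \in F_n} w_{(e, 1, e_0)} v_{(f, 1, e_0)} p_n v_{(f, 1, e_0)}^* w_{(e, 1, e_0)}^*,
\]
conjugating a quasi-diagonalizing sequence $\{p_n\}$ for $C_r^*(H)$ by the left and right regular representations over finite sets $F_n \subset \idempotents^+$; this is a warm-up for, and a special case of, the construction in Lemma~\ref{lemma:qd-dclass}. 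Your approach is more abstract: you identify $C_r^*(S)$ with $\mathbb{C} \oplus \bigl(\mathcal{K}(\ell^2(\idempotents^+)) \otimes C_r^*(H)\bigr)$ and invoke permanence of quasi-diagonality under matrix amplification and inductive limits. The obstacle you flag is easily resolved, since the action of $v_{(f_2,h_2,e_2)}$ on $\ell^2(\idempotents^+ \times H \times \{e\})$ is literally independent of $e$, so the subrepresentations are identical, not merely equivalent; and in the Brandt case the $\mathbb{C}$-summand is always present (not ``at most''), since $v_0$ is the rank-one projection onto $\mathbb{C}\delta_0$ while $v_s\delta_0 = \delta_0$ for every $s$. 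Your route is slicker and more conceptual; the paper's route has the advantage of being self-contained (no appeal to permanence properties beyond Theorem~\ref{thm:tww:groups}) and of previewing the hands-on technique needed for the general Theorem~\ref{thm:dclasses}.
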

  \begin{proof}
    In order to show (\ref{thm:minimalqd:qd}) $\Rightarrow$ (\ref{thm:minimalqd:subgrp}) it suffices to prove that $H$ is amenable, where $H$ is as in Theorem~\ref{thm:grpdmin}~(\ref{thm:grpd:struc}). This, in turn, follows from Theorem~\ref{thm:subgrp}. Indeed, note that $H \subset S$ is isolated, since the spectrum $\spectrum$ is discrete (this follows from the discussion of (\ref{thm:grpd:struc}) $\Rightarrow$ (\ref{thm:grpd:minimal}) in Theorem~\ref{thm:grpdmin}). It is also well known that amenability passes to subgroups.

    The implication (\ref{thm:minimalqd:subgrp}) $\Rightarrow$ (\ref{thm:minimalqd:struc}) is a direct consequence of Theorem~\ref{thm:grpdmin}.

    In order to prove (\ref{thm:minimalqd:struc}) $\Rightarrow$ (\ref{thm:minimalqd:qd}), observe that if $S$ is a group then the claim follows from~\cite[Corollary~C]{TWW17}. In case $S$ is not a group let $\idempotents^+$ and $H$ be as in the hypothesis, and let $e_0 \in H \cap \idempotents$ be the unit of $H$. Since $H$ is amenable, again by~\cite[Corollary~C]{TWW17}, let $p_n$ be a sequence of finite rank projections converging strongly to the identity of $\ell^2(H)$ and such that $||\lambda_h p_n - p_n \lambda_h|| \rightarrow 0$ for every $h \in H$. Fix an increasing sequence of finite subsets $F_n \subset \idempotents^+$, and consider the projections
    \begin{equation}
      q_n := v_0 + \sum_{e, f \in F_n} w_{(e, 1, e_0)} v_{(f, 1, e_0)} p_n v_{(f, 1, e_0)}^* w_{(e, 1, e_0)}^*. \nonumber
    \end{equation}
    It is then not hard to see that $q_n$ converges strongly to the identity of $\ell^2(S)$ and is asymptotically central in $C_r^*(S)$ (see Lemma~\ref{lemma:qd-dclass} below for a similar argument).
  \end{proof}

  The class of inverse semigroups covered by Theorem~\ref{thm:minimalqd} is interesting (and already appeared in Example~\ref{ex:bisimple}), as it allows us to construct inverse semigroups that have quasi-diagonal C*-algebras with no faithful traces. We end the section with such an example.
  \begin{example} \label{ex:qdnotr}
    Let $\idempotents^+$ be any countably infinite set, and consider the set $T := \idempotents^+ \times \idempotents^+ \sqcup \{0\}$ equipped with the operation $(f_2, e_2) (f_1, e_1) = (f_2, e_1)$ whenever $e_2 = f_1$, and $0$ otherwise. Note that, by Theorem~\ref{thm:grpdmin}, $T$ is $0$-bisimple and its partial order restricts to equality in the $\mathcal{D}$-classes. As the only subgroup of $T$ is the trivial group, by Theorem~\ref{thm:minimalqd} the C*-algebra $C_r^*(T)$ is quasi-diagonal. Consider now $S := T \sqcup \{1\}$, where $1$ acts as a unit in $T$. It is routine to show that then $C_r^*(S) = C_r^*(T) \oplus \mathbb{C}$, and hence is also a quasi-diagonal C*-algebra. However, observe that $C_r^*(S)$ has no faithful tracial state. Indeed, any tracial state $\tau \colon C_r^*(S) \rightarrow \mathbb{C}$ satisfies that $\tau(v_{(f, e)}) = \tau(v_{(f', e')}) = \delta \geq 0$ for every pair $(f, e), (f', e') \in S$. Putting $\rho := \tau(v_0) < \delta$, let $k \in \mathbb{N}$ be such that $k (\delta - \rho) > 1$, and note that
    \begin{equation}
      1 = \tau(1) \geq \tau\left(v_0\right) + \sum_{i = 1}^k \tau\left(v_{(f_i, e_i)} - v_0\right) = \rho + k \left(\delta - \rho\right) > 1. \nonumber
    \end{equation}
    Thus, any tracial state in $C_r^*(S)$ must vanish in $C_r^*(T)$, and hence is non-faithful.
  \end{example}

  \section{\texorpdfstring{$\mathcal{D}$}{D}-classes and quasi-diagonality} \label{sec:dclasses}
  We now proceed onto the proof of the final contribution of the text, namely Theorem~\ref{thm:dclasses}, which constructs explicit quasi-diagonalizing projections of the reduced C*-algebra of an inverse semigroup satisfying certain characteristics. The following two lemmas are rather basic, but we include them here for completeness and future reference.
  \begin{lemma} \label{lemma:qd-hclassmult}
    Let $S$ be an inverse semigroup, $s \in S$ be an element and $H \subset S$ be an $\mathcal{H}$-class. Then the following hold:
    \begin{enumerate}
      \item \label{item:lemma:qd-hclassmult:1} $H \subset s^*s \cdot S$ if, and only if, $H \cap s^*s \cdot S \neq \emptyset$.
      \item \label{item:lemma:qd-hclassmult:2} If $H \subset s^*s \cdot S$ then $sH$ is an $\mathcal{H}$-class within the same $\mathcal{L}$-class as $H$.
    \end{enumerate}
  \end{lemma}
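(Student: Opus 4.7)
The plan is to run both parts off a single elementary observation: for any idempotent $e \in \idempotents$ one has $eS = \{r \in S \mid rr^* \leq e\}$. The inclusion $\supset$ comes from $er = e \cdot rr^* \cdot r = rr^* r = r$ when $rr^* \leq e$; conversely, if $r = er$ then $rr^* = err^* = rr^* e$, giving $rr^* \leq e$. Since $rr^*$ depends only on the $\mathcal{R}$-class of $r$, and \emph{a fortiori} only on its $\mathcal{H}$-class, the property ``$r \in s^*s \cdot S$'' is invariant along any $\mathcal{H}$-class. This immediately yields~(\ref{item:lemma:qd-hclassmult:1}).

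For~(\ref{item:lemma:qd-hclassmult:2}), I would assume $H \subset s^*s \cdot S$, so that $s^*s\,h = h$ for every $h \in H$, and first show that $sH$ lies in a single $\mathcal{H}$-class $H'$ sitting inside the $\mathcal{L}$-class of $H$. The computation $(sh)^*(sh) = h^*s^*sh = h^*h$ is constant on $H$ (as $h^*h$ is $\mathcal{L}$-invariant) and places $sh$ in the $\mathcal{L}$-class of $h$, while $(sh)(sh)^* = shh^*s^*$ is constant on $H$ (as $hh^*$ is $\mathcal{R}$-invariant), so all elements of $sH$ are pairwise $\mathcal{R}$-related. Together this gives $sH \subset H'$ for some $\mathcal{H}$-class $H'$ with $H' \lrel H$.

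It then remains to see that $sH = H'$, which I would do by exhibiting an inverse $t \mapsto s^*t$ from $H'$ back into $H$. The key preliminary step is that $tt^* = shh^*s^* \leq ss^*$ (since $shh^*s^* \cdot ss^* = shh^*(s^*ss^*) = shh^*s^*$), which forces $ss^*t = t$ and therefore $s(s^*t) = t$. Using this together with $hh^* \leq s^*s$, the routine computations $(s^*t)(s^*t)^* = s^*tt^*s = s^*(shh^*s^*)s = hh^*$ and $(s^*t)^*(s^*t) = t^*ss^*t = t^*t = h^*h$ show that $s^*t \hrel h$, i.e., $s^*t \in H$, so the map $h \mapsto sh$ is onto $H'$, and we are done.

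I expect no step to be a serious obstacle, as the whole lemma amounts to careful bookkeeping with the four Green's relations once one fixes the characterization $eS = \{r \mid rr^* \leq e\}$. The only mild subtlety is the upgrade from $sH \subset H'$ to $sH = H'$, which is precisely what the inverse-map argument in the last paragraph is designed to handle.
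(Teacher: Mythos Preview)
Your proof is correct and follows essentially the same approach as the paper. For part~(\ref{item:lemma:qd-hclassmult:1}) the paper argues exactly as you do, via $h \in s^*s \cdot S \Leftrightarrow hh^* \leq s^*s$ together with the constancy of $hh^*$ on $H$; for part~(\ref{item:lemma:qd-hclassmult:2}) the paper simply writes ``can be proved similarly,'' so your explicit verification that $sH$ sits in a single $\mathcal{H}$-class and your surjectivity argument via $t \mapsto s^*t$ are a clean fleshing-out of what the paper leaves to the reader rather than a genuinely different route.
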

  \begin{proof}
    Assertion~(\ref{item:lemma:qd-hclassmult:1}) follows from the observation that $h_1h_1^* = h_2h_2^*$ for every $h_1, h_2 \in H$. Thus if $h_1 \in H \cap s^*s \cdot S$ then $h_2h_2^* = h_1h_1^* \leq s^*s$, proving that $h_2 \in s^*s \cdot S$ for any $h_2 \in H$. Assertion~(\ref{item:lemma:qd-hclassmult:2}) can be proved similarly.
  \end{proof}
  \begin{lemma} \label{lemma:qd-normequal}
    Let $a, v \in \mathcal{B}(\mathcal{H})$. Suppose that $v$ is a partial isometry and $a(1 - vv^*) = 0$. Then $||a|| = ||av||$.
  \end{lemma}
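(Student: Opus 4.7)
The plan is to prove the two inequalities $\|av\| \leq \|a\|$ and $\|a\| \leq \|av\|$ separately, both of which follow directly from submultiplicativity of the operator norm together with the fact that a partial isometry has norm at most $1$.

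For the first inequality, I would simply observe that $\|av\| \leq \|a\|\,\|v\| \leq \|a\|$, using that $v$ is a partial isometry (so $\|v\| \leq 1$; indeed $\|v\| \in \{0,1\}$). For the reverse inequality, the hypothesis $a(1 - vv^*) = 0$ is exactly the statement that $a = avv^*$. Hence
\begin{equation}
  \|a\| = \|avv^*\| \leq \|av\|\,\|v^*\| \leq \|av\|, \nonumber
\end{equation}
again using that $\|v^*\| = \|v\| \leq 1$. Combining the two inequalities yields the claim.

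There is essentially no obstacle here; the lemma is a routine consequence of the $C^*$-identity properties of partial isometries. The only subtlety worth mentioning is the geometric content of the hypothesis, namely that $a$ factors through the projection $vv^*$ onto the range of $v$, so right-multiplication by $v$ recovers the full norm of $a$. This explains why the statement is being recorded as a reusable lemma rather than being inlined elsewhere.
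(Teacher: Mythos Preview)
Your proof is correct and follows essentially the same approach as the paper: the key step is rewriting $a = avv^*$ from the hypothesis and then applying submultiplicativity with $\|v^*\| \leq 1$. You are slightly more explicit in recording both inequalities, but the argument is identical in substance.
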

  \begin{proof}
    Note that $a = avv^* + a(1 - vv^*) = avv^*$, and therefore $||a|| = ||avv^*|| \leq ||av||$.
  \end{proof}

  The upcoming lemma is the main technical result of the section, and the main tool to prove Theorem~\ref{thm:dclasses} below.
  \begin{lemma} \label{lemma:qd-dclass}
    Let $S$ be a discrete inverse semigroup, and let $D \subset S$ be a $\mathcal{D}$-class with finitely many projections. Fix an idempotent $e_0 \in D$, and suppose its $\mathcal{H}$-class is amenable (as a group). Then there is a sequence of finite dimensional orthogonal projections $\{q_n\}_{n \in \mathbb{N}} \subset \mathcal{B}(\ell^2(S))$ satisfying the following conditions:
    \begin{enumerate}
      \item \label{item-lemma:qd-dclass:below} $q_n \cdot 1_D = q_n = 1_D \cdot q_n$, where $1_D$ denotes the characteristic function of $D$.
      \item \label{item-lemma:qd-dclass:sot} $q_n$ converges strongly to $1_D$.
      \item \label{item-lemma:qd-dclass:comm} $||v_s q_n - q_n v_s|| \rightarrow 0$ as $n \rightarrow \infty$ for every $v_s \in C_r^*(S)$.
    \end{enumerate}
  \end{lemma}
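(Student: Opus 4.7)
The plan is to decompose $\ell^2(D)$ as a finite direct sum of copies of $\ell^2(H_{e_0})$ indexed by pairs of idempotents of $D$, and then transport a finite rank projection of $\ell^2(H_{e_0})$ (coming from the amenability of $H_{e_0}$ and Theorem~\ref{thm:tww:groups}) to every such summand.

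Concretely, enumerate $D \cap \idempotents = \{f_1, \dots, f_k\}$ (finite by hypothesis) and, as in Lemma~\ref{lemma:hclass}, choose $r_i \in D$ with $r_i^* r_i = e_0$ and $r_i r_i^* = f_i$ (taking $r_{i_0} = e_0$ where $f_{i_0} = e_0$). Setting $E_{ij} := \{x \in S : xx^* = f_i,\, x^*x = f_j\}$, every element of $E_{ij}$ is uniquely of the form $r_i h r_j^*$ for some $h \in H_{e_0}$. This yields the orthogonal decomposition $\ell^2(D) = \bigoplus_{i,j=1}^{k} \ell^2(E_{ij})$ together with canonical unitaries $U_{ij} \colon \ell^2(H_{e_0}) \to \ell^2(E_{ij})$, $\delta_h \mapsto \delta_{r_i h r_j^*}$. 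Let $p_n \in \mathcal{B}(\ell^2(H_{e_0}))$ be finite rank projections as provided by Theorem~\ref{thm:tww:groups}, and define
\[
q_n := \sum_{i,j=1}^{k} U_{ij} \, p_n \, U_{ij}^{*} \in \mathcal{B}(\ell^2(S)),
\]
extended by zero on $\ell^2(S) \ominus \ell^2(D)$. Since the projections $U_{ij}p_n U_{ij}^*$ have mutually orthogonal ranges inside $\ell^2(D)$, each $q_n$ is a finite rank orthogonal projection. Conditions~(\ref{item-lemma:qd-dclass:below}) and~(\ref{item-lemma:qd-dclass:sot}) are immediate.

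The decisive observation for~(\ref{item-lemma:qd-dclass:comm}) is that $v_s$ preserves $\mathcal{D}$-classes: whenever $v_s \delta_x \neq 0$, the identity $s^*s x = x$ forces $(sx)^*(sx) = x^*x$, so $sx \lrel x$. In particular $x \in D$ if and only if $sx \in D$, so both $v_s q_n$ and $q_n v_s$ vanish on $\ell^2(S) \ominus \ell^2(D)$, and it suffices to understand the commutator one $\mathcal{H}$-class at a time. For fixed $(i,j)$, either $s^*s \not\geq f_i$, in which case $v_s$ annihilates $\ell^2(E_{ij})$; or $s^*s \geq f_i$, in which case $f_l := s f_i s^*$ belongs to $D \cap \idempotents$ and a direct computation gives $s r_i = r_l \, g_{s,i}$ with $g_{s,i} := r_l^* s r_i \in H_{e_0}$. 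Hence $v_s \delta_{r_i h r_j^*} = \delta_{r_l (g_{s,i} h) r_j^*}$, i.e. under the identifications $U_{ij}, U_{lj}$ the map $v_s$ restricts to left translation by $g_{s,i}$. A brief calculation then yields
\[
(v_s q_n - q_n v_s)|_{\ell^2(E_{ij})} = U_{lj}\bigl(\lambda_{g_{s,i}} p_n - p_n \lambda_{g_{s,i}}\bigr) U_{ij}^{*},
\]
where $\lambda$ denotes the left regular representation of $H_{e_0}$. The assignment $i \mapsto l$ (on those $i$ with $s^*s \geq f_i$) is injective, as sandwiching $s f_{i_1} s^* = s f_{i_2} s^*$ between $s^*$ and $s$ yields $f_{i_1} = f_{i_2}$. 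Therefore the blocks of the commutator have pairwise orthogonal source and target, so that
\[
\|v_s q_n - q_n v_s\| \leq \max_{1 \leq i \leq k} \bigl\|\lambda_{g_{s,i}} p_n - p_n \lambda_{g_{s,i}}\bigr\| \xrightarrow{n \to \infty} 0,
\]
the maximum being taken over the finite index set $\{1, \dots, k\}$.

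The main obstacle is the bookkeeping around the egg-box structure: verifying that the parametrization $s = r_i h r_j^*$ is a bijection on each $E_{ij}$, that $v_s$ really does act as left translation by $g_{s,i}$ after passing through the trivializations $U_{ij}$, and that the different blocks of $[v_s, q_n]$ are mutually orthogonal. It is precisely at the final step that the finiteness hypothesis $|D \cap \idempotents| < \infty$ is used, as it allows a uniform maximum over finitely many elements of $H_{e_0}$.
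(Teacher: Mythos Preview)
Your proof is correct and follows essentially the same approach as the paper: decompose $\ell^2(D)$ along the egg-box into $\mathcal{H}$-classes, transport a quasi-diagonalising projection $p_n$ from $\ell^2(H_{e_0})$ to each block via the bijections of Lemma~\ref{lemma:hclass}, and reduce the commutator estimate to finitely many commutators $[\lambda_g,p_n]$ in $C_r^*(H_{e_0})$. The only cosmetic difference is that the paper realises your unitaries $U_{ij}$ concretely as $w_{r_e}v_{r_f}$ (using both regular representations) and invokes Lemma~\ref{lemma:qd-normequal} to strip off the partial isometries, whereas you track the block structure directly and use the injectivity of $i\mapsto l$ to control the norm.
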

  \begin{proof}
    Let $\idempotents_D = \idempotents \cap D$ be the set of idempotents within $D$, which is finite by assumption. For each $e \in \idempotents_D$ denote by $L_e$ and $R^e$ respectively the $\mathcal{L}$-class and $\mathcal{R}$-class of $e$. Likewise, let $H_e^f = L_e \cap R^f$ be the different $\mathcal{H}$-classes in $D$, and in particular note that $e \in H_e^e$. As in the proof of Lemma~\ref{lemma:hclass}, let $r_{e_0} := e_0$, and for each other $f \in \idempotents_D$ fix an element $r_f \in H_{e_0}^f$. Since $H_{e_0}^{e_0}$ is an amenable group, by \cite[Corollary~C]{TWW17} there is a sequence of projections $\{p_n\}_{n \in \mathbb{N}}$ witnessing the quasi-diagonality of $C_r^*(H_{e_0}^{e_0})$. For each pair $e, f \in \idempotents_D$ let $p_n^{e,f} \in \mathcal{B}(\ell^2(S))$ be given by
    \begin{equation}
      p_n^{e,f} = w_{r_e} v_{r_f} p_n v_{r_f}^* w_{r_e}^*, \nonumber
    \end{equation}
    where $v, w$ are, respectively, the left and right regular representations of $S$. Note that $p_n = p_n^{e_0,e_0}$, following from the choice of $r_{e_0} = e_0$. It is then clear that $p_n^{e,f}$ are finite dimensional orthogonal projections whose range sits within $\ell^2 (H_e^f)$, and hence $p_n^{e,f} p_n^{e',f'} = 0$ whenever $(e, f) \neq (e',f')$. In addition, standard computations show that the following commutation relations hold:
    \begin{equation}
      v_{r_f} p_n^{e_0,e_0} = p_n^{e_0,f} v_{r_f} \;\;\; \text{and} \;\;\; w_{r_e} p_n^{e_0,f} = p_n^{e,f} w_{r_e}. \nonumber
    \end{equation}
    In this context, let
    \begin{equation}
      q_n := \sum_{e, f \in \idempotents_D} p_n^{e,f}, \nonumber
    \end{equation}
    which is a finite dimensional orthogonal projection by the above discussion. Moreover, since $p_n$ converges strongly to the identity of $\ell^2 (H_{e_0}^{e_0})$, so does $q_n$ converge strongly to the identity of $\ell^2(D)$. Thus all we have left to do is prove that $q_n$ asymptotically commutes with every $v_s \in C_r^*(S)$. For this, due to the built-in orthogonality of $p_n^{e,f}$ and $p_n^{e',f'}$:
    \begin{equation} \label{eq-lemma:qd-dclass:norm}
      \left|\left| v_s \, q_n - q_n \, v_s\right|\right| \; = \; \left|\left| \sum_{e, f \in \idempotents_D} v_s \, p_n^{e,f} - p_n^{e,f} \, v_s \right|\right| \; = \; \max_{e \in \idempotents_D} \; \max_{\substack{f \in \idempotents_D \\ f \leq s^*s}} \; \left|\left|v_s \, p_n^{e,f} - p_n^{e,sfs^*} \, v_s\right|\right|
    \end{equation}
    where, for the last equality, we have used that $v_s p_n^{e,f} = 0$ unless $f \leq s^*s$. Seen from the point of view of the $\mathcal{H}$-classes themselves this means that $H_e^f \subset s^*s \cdot S$ which, by Lemma~\ref{lemma:qd-hclassmult}, is equivalent to $H_e^f \cap s^*s \cdot S \neq \emptyset$. By the same Lemma~\ref{lemma:qd-hclassmult} observe that, in this case, $sH_e^f$ is a (possibly different) $\mathcal{H}$-class in the same $\mathcal{L}$-class as $H_e^f$, and one can check that $s H_e^f = H_e^{sfs^*}$. By Eq.~(\ref{eq-lemma:qd-dclass:norm}) it is enough to show that $||v_s p_n^{e,f} - p_n^{e,sfs^*} v_s|| \rightarrow 0$ for every ordered pair $e, f \in \idempotents_D$ where $f \leq s^*s$. To this end, observe that
    \begin{align}
      \left|\left|v_s p_n^{e,f} - p_n^{e,sfs^*} v_s\right|\right| & = \left|\left|v_s w_{r_e} p_n^{e_0,f} w_{r_e}^* - w_{r_e} p_n^{e_0,sfs^*} w_{r_e}^* v_s\right|\right| \leq \left|\left|v_s p_n^{e_0,f} - p_n^{e_0,sfs^*} v_s\right|\right| \nonumber \\
      & = \left|\left|\left(v_s p_n^{e_0,f} - p_n^{e_0,sfs^*} v_s\right) v_{r_f} \right|\right| = \left|\left|v_{sr_f} p_n^{e_0,e_0} - p_n^{e_0,sfs^*} v_{sr_f}\right|\right|, \nonumber
    \end{align}
    where, for the prior to last equality, we have used Lemma~\ref{lemma:qd-normequal}. Indeed, if $a = v_s p_n^{e_0,f} - p_n^{e_0,sfs^*} v_s$ and $v = v_{r_f}$ then $a(1 - vv^*) = 0$ by a simple computation. Therefore, by renaming $t := sr_f$ note that $t \in H_{e_0}^{sfs^*} = H_{e_0}^{tt^*}$. Furthermore
    \begin{equation} \label{eq-lemma:qd-dclasscenter}
      v_{sr_f} p_n^{e_0,e_0} = v_{r_{tt^*}} v_{r^*_{tt^*} t} \, p_n \;\;\; \text{and} \;\;\; p_n^{e_0,sfs^*} v_{sr_f} = p_n^{e_0,tt^*} v_{r_{tt^*}} v_{r_{tt^*}^*} v_t = v_{r_{tt^*}} p_n v_{r^*_{tt^*} t}.
    \end{equation}
    Thus, the fact that $q_n$ is asymptotically central in $C_r^*(S)$ follows from Eq.~(\ref{eq-lemma:qd-dclasscenter}) and the fact that $p_n$ is asymptotically central in $C_r^*(H_{e_0}^{e_0})$:
    \begin{equation}
      \left|\left|v_s p_n^{e,f} - p_n^{e, sfs^*} v_s\right|\right| \leq \left|\left|v_{r_{tt^*}} \left(v_{r^*_{tt^*} t} p_n - p_n v_{r^*_{tt^*}t}\right) \right|\right| \leq \left|\left|v_{r^*_{tt^*} t} p_n - p_n v_{r^*_{tt^*}t} \right|\right| \xrightarrow{n \rightarrow \infty} 0, \nonumber 
    \end{equation}
    since $r^*_{tt^*} t \in H_{e_0}^{e_0}$.
  \end{proof}

  \begin{theorem} \label{thm:dclasses}
    Let $S$ be a countable and discrete inverse semigroup. Suppose the following holds:
    \begin{enumerate}
      \item \label{thm:qd-findclasses:d} No $\mathcal{D}$-class in $S$ has infinitely many projections.
      \item \label{thm:qd-findclasses:a} Every subgroup of $S$ is amenable.
    \end{enumerate}
    Then $C_r^*(S)$ is a quasi-diagonal C*-algebra.
  \end{theorem}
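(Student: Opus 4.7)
The plan is to apply Lemma~\ref{lemma:qd-dclass} separately to each $\mathcal{D}$-class $D \subset S$ (whose hypotheses are secured by assumptions~(\ref{thm:qd-findclasses:d}) and~(\ref{thm:qd-findclasses:a})), obtaining for each $D$ a sequence $\{q_n^D\}_n$ of finite-dimensional projections supported on $\ell^2(D)$ that converges strongly to $1_D$ and asymptotically commutes with every $v_s \in C_r^*(S)$. A quasi-diagonalizing sequence for all of $C_r^*(S)$ should then be obtainable by patching these together via a diagonal argument over the (countably many) $\mathcal{D}$-classes.

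The first step I would carry out is a structural observation that makes the patching clean: every $v_s$ preserves the orthogonal decomposition $\ell^2(S) = \bigoplus_D \ell^2(D)$ indexed by the $\mathcal{D}$-classes. Indeed, whenever $v_s \delta_x = \delta_{sx}$ is non-zero one has $s^*sx = x$, so $(sx)^*(sx) = x^*s^*sx = x^*x$, which forces $sx \lrel x$; hence $sx$ and $x$ lie in the same $\mathcal{D}$-class. Consequently $[v_s, q_n^D]$ is supported on $\ell^2(D)$ (since $q_n^D$ is), and commutators $[v_s, q_n^{D_i}]$ and $[v_s, q_{n'}^{D_j}]$ for $D_i \neq D_j$ act on mutually orthogonal subspaces, so for any finite family of choices one has
\begin{equation}
  \Bigl|\Bigl|\sum_i \bigl[v_s, q_{n(i)}^{D_i}\bigr]\Bigr|\Bigr| = \max_i \bigl|\bigl|\bigl[v_s, q_{n(i)}^{D_i}\bigr]\bigr|\bigr|. \nonumber
\end{equation}

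With this in hand, since $S$ is countable I would enumerate its $\mathcal{D}$-classes as $\{D_i\}_{i \in \mathbb{N}}$ and its elements as $\{s_j\}_{j \in \mathbb{N}}$, and run a standard diagonal choice: for every $m \in \mathbb{N}$ and $i \leq m$ I pick $n(i, m) \geq m$ large enough (using Lemma~\ref{lemma:qd-dclass}~(\ref{item-lemma:qd-dclass:comm})) so that $||v_{s_j} q_{n(i,m)}^{D_i} - q_{n(i,m)}^{D_i} v_{s_j}|| < 1/m$ for all $j \leq m$. Setting $Q_m := \sum_{i=1}^m q_{n(i,m)}^{D_i}$ then yields a finite-dimensional orthogonal projection (by mutual orthogonality of the summands); the displayed identity above gives $||[v_{s_j}, Q_m]|| < 1/m$ for every $j \leq m$, and for $x \in D_{j(x)}$ one has $Q_m \delta_x = q_{n(j(x), m)}^{D_{j(x)}} \delta_x \to \delta_x$ as soon as $m \geq j(x)$, since $n(j(x), m) \to \infty$ and $q_n^{D_{j(x)}}$ converges strongly to $1_{D_{j(x)}}$. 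Asymptotic commutation with all of $C_r^*(S)$ then follows by density from asymptotic commutation with each generator $v_{s_j}$.

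I expect the main conceptual hurdle to be precisely the potential norm explosion of summing infinitely many asymptotically vanishing commutators across growing families of $\mathcal{D}$-classes; a naive triangle-inequality bound would not suffice. The $\mathcal{D}$-class preservation observation is the key to bypassing this obstacle, as it turns the sum into an essentially block-diagonal operator whose norm is controlled by the worst individual block rather than by their accumulated total.
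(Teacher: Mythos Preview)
Your proposal is correct and follows essentially the same approach as the paper's proof: apply Lemma~\ref{lemma:qd-dclass} on each $\mathcal{D}$-class, then patch the resulting projections over increasing finite families of $\mathcal{D}$-classes via a diagonal choice, using the block-diagonal structure of the commutators with respect to $\ell^2(S)=\bigoplus_D \ell^2(D)$ to replace a sum by a maximum. Your write-up is in fact slightly more explicit than the paper on the key point that each $v_s$ preserves $\ell^2(D)$ (the paper records this only implicitly in the step $\|v_s q_n - q_n v_s\| = \max_{D}\|v_s q_n^D - q_n^D v_s\|$ and in the remark that the orthogonality of the $q_n^D$ is crucial), and your allowance of a $\mathcal{D}$-class-dependent index $n(i,m)$ is a harmless variant of the paper's single reindexing.
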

  \begin{proof}
    Let $\mathfrak{D}$ be the, possibly infinite, set of $\mathcal{D}$-classes of $S$. As it is countable we may write it as $\mathfrak{D} = \cup_{n \in \mathbb{N}} \mathfrak{D}_n$, where $\mathfrak{D}_n \subset \mathfrak{D}_{n + 1}$ are finite sets. Likewise, write $S = \cup_{n \in \mathbb{N}} \mathcal{F}_n$ as an increasing sequence of finite subsets. Now, for every $n \in \mathbb{N}$ and $D \in \mathfrak{D}_n$, let $\{q_n^D\}_{n \in \mathbb{N}}$ be as in the conclusion of Lemma~\ref{lemma:qd-dclass}. In addition, note we may suppose that
    \begin{equation} \label{eq-thm:qd-findclasses}
      \max_{s \in \mathcal{F}_n} \; \max_{D \in \mathfrak{D}_n} \; \left|\left|v_s \, q_n^D - q_n^D \, v_s \right|\right| \leq 1/n
    \end{equation}
    for every $n \in \mathbb{N}$. Consider
    \begin{equation}
      q_n := \sum_{D \in \mathfrak{D}_n} q_n^D. \nonumber
    \end{equation}
    It is clear that $q_n$ is a finite dimensional orthogonal projection. Moreover, since $q_n^D$ strongly converges to $1_D$ for every $\mathcal{D}$-class $D$, it follows that so does $q_n$ strongly converge to $1$. Lastly, $q_n$ is also asymptotically central by Eq.~(\ref{eq-thm:qd-findclasses}):
    \begin{equation}
      \max_{s \in \mathcal{F}_n} \; \left|\left| v_s q_n - q_n v_s\right|\right| = \max_{s \in \mathcal{F}_n} \; \max_{\substack{D \in \mathfrak{D}_n}} \; \left|\left| v_s \, q_n^D - q_n^D \, v_s\right|\right| \leq 1/n \xrightarrow{n \rightarrow \infty} 0. \nonumber
    \end{equation}
    Note that it is crucial that $q_n^D$ and $q_n^{D'}$ are orthogonal projections when $D$ and $D'$ are different $\mathcal{D}$-classes (see item~(\ref{item-lemma:qd-dclass:below}) in Lemma~\ref{lemma:qd-dclass}).
  \end{proof}

  \begin{example}
    Recall from Example~\ref{ex:groupbundle} the notion of a Clifford semigroup $S = \sqcup_{e \in E} G_e$, and note that in these semigroups the $\mathcal{D}$-classes have exactly one idempotent (namely the unit of $G_e$). By Theorem~\ref{thm:dclasses}, the C*-algebra $C_r^*(S)$ is quasi-diagonal whenever every group $G_e$ is amenable.
  \end{example}
  The following example actually shows that the converse of Theorem~\ref{thm:dclasses} does not hold in general. Moreover, it also provides an inverse semigroup with quasi-diagonal reduced C*-algebras with a non-amenable subgroup, proving that the conditions of Theorem~\ref{thm:subgrp} are sharp.
  \begin{example} \label{ex:qd-nonfl}
    Let $\mathbb{F}_2$ be the non-abelian free group on rank $2$. Fix a descending chain of finite index normal subgroups $\{N_k\}_{k \in \mathbb{N}}$ with trivial intersection, and denote the quotient maps by $q_k \colon \mathbb{F}_2 \rightarrow H_k$. For notation reasons, let $N_\infty := \{1\}$, and let $H_\infty := \mathbb{F}_2$. Consider $S := \sqcup_{k \in \mathbb{N} \cup \{\infty\}} H_k$, where
    \begin{equation}
      q_k\left(g\right) \cdot q_{k'}\left(h\right) := q_{\min\{k, k'\}}\left(gh\right). \nonumber
    \end{equation}
    It is then routine to check that $S$ is an inverse semigroup in which every idempotent is central, as these are just the identities of $H_k$. We claim that $C_r^*(S)$ is quasi-diagonal and has a non-amenable subgroup, namely $H_\infty = \mathbb{F}_2$. In order to do this we shall construct an arbitrarily large finite rank projection $p$ that almost commutes with $v_s$ for predetermined elements $s \in S$.

    Given $r, \varepsilon > 0$ let $n, m \in \mathbb{N}$ be so that $n \geq r/\varepsilon$ and such that $q_m$ is injective on the ball $B_{n+r} \subset \mathbb{F}_2$. Then for every $x \in B_n$ let
    \begin{equation}
      w_{q_m(x)} := \left(\frac{\ell\left(x\right)}{n}\right)^{1/2} \delta_x + \left(\frac{n - \ell\left(x\right)}{n}\right)^{1/2} \delta_{q_m\left(x\right)} \nonumber
    \end{equation}
    while if $q_m(x) \in H_m \setminus q_m(B_n)$ we let $w_{q_m(x)} := \delta_{q_m\left(x\right)}$. Note that by $\ell(x)$ we mean the length of $x \in \mathbb{F}_2$, i.e., the distance between $1$ and $x$ in a fixed Cayley graph of $\mathbb{F}_2$. Lastly, let $p$ be the orthogonal projection onto $\text{span}\{w_{q_m(x)}\} + \ell^2(H_0 \cup \dots \cup H_{m-1})$, which is a finite dimensional space since $H_k$ are all finite groups when $k < \infty$. Observe that $\{w_{q_m(x)}\}$ forms a family of pairwise orthogonal unit vectors in $\ell^2(S)$, and therefore
    \begin{align}
      \left|\left| v_s p - p v_s \right|\right| = \max\left\{\left|\left| v_s w_{q_m(x)} - w_{q_m(sx)} \right|\right| \;\; \text{where} \;\; q_m(x) \in H_m\right\} \leq r/n \leq \varepsilon, \nonumber
    \end{align}
    which proves the claim.

    Note that, by the main result of~\cite{Willett15}, $C_r^*(S)$ may actually be $C_{full}^*(\mathbb{F}_2)$ when $G = \mathbb{F}_2$ and the sequence $\{N_k\}_{k \in \mathbb{N}}$ is appropriately chosen. Therefore this construction recovers and generalizes the well-known fact that $C_{full}^*(\mathbb{F}_2)$ is quasi-diagonal.
  \end{example}

  We end the paper with two remarks.
  \begin{remark}
    Observe that the very same proof of Theorem~\ref{thm:dclasses} goes through even when the $\mathcal{D}$-classes are not finite but only \textit{sparse}, that is, every $\mathcal{D}$-class in a finitely generated sub-semigroup has only finitely many idempotents. Indeed, since quasi-diagonality is a local notion, i.e., depends only on finitely many elements at a time, we could work with sparse $\mathcal{D}$-classes.
  \end{remark}
  \begin{remark}
    Observe, furthermore, that both conditions~(\ref{thm:qd-findclasses:d}) and~(\ref{thm:qd-findclasses:a}) in Theorem~\ref{thm:dclasses} can be seen geometrically in the semigroup $S$. Indeed, following~\cite{CMS22}, one can equip $S$ with a proper and right subinvariant metric, and then condition~(\ref{thm:qd-findclasses:d}) means the Sch\"{u}tzenberger graphs of $S$ are finite unions of group Cayley graphs. They are, thus, coarsely equivalent to their own unique $\mathcal{H}$-class. Moreover, that $\mathcal{H}$-class is amenable by condition~(\ref{thm:qd-findclasses:a}).
  \end{remark}

  \providecommand{\bysame}{\leavevmode\hbox to3em{\hrulefill}\thinspace}
  
\end{document}